%%% 2010/04/16
%\documentclass[reqno,12pt]{amsart}
\documentclass[12pt]{article}

\usepackage{amsmath,amsfonts,amssymb,amsthm}
\input amssym.def
\topmargin -1.3cm \textwidth 15 cm \textheight 22cm \oddsidemargin
0.25cm \pagestyle{plain} \pagenumbering{arabic}

\setlength{\topmargin}{0mm}
\setlength{\oddsidemargin}{0mm}
\setlength{\evensidemargin}{0mm}
\setlength{\textheight}{225mm}
\setlength{\textwidth}{160mm}

%\numberwithin{equation}{section}

\newcommand{\Z}{{\mathbb Z}}
\newcommand{\C}{{\mathbb C}}

\newcommand{\N}{{\mathbb N}}

\def\<{\langle}
\def\>{\rangle}

\newtheorem{thm}{Theorem}[section]
\newtheorem{prop}[thm]{Proposition}
\newtheorem{lem}[thm]{Lemma}

\newtheorem{rmk}[thm]{Remark}
\newtheorem{definition}[thm]{Definition}

%%%%%%%%%%%%%%%%%%%%%%%%%%%%%%%%%%%%%%%%%%%%%%%%%%%%%
\input amssym.def
\topmargin -1.3cm \textwidth 15 cm \textheight 22cm \oddsidemargin
0.25cm \pagestyle{plain} \pagenumbering{arabic}
\begin{document}
	
	\begin{center}
		{\Large \bf  Automorphism group and twisted modules of the twisted Heisenberg-Virasoro vertex operator algebra}
	\end{center}
	
	\begin{center}
		{Hongyan Guo
			\footnote{Partially supported by
				NSFC (No.11901224)
				and NSF of Hubei Province (No.2019CFB160)
			}
			%\footnote{
			%	AMS 2010 Mathematics Subject Classification: 17B65 17B67}
			\\
			School of Mathematics and Statistics,
			Central China Normal University, \\ Wuhan 430079, China
		}
		
	\end{center}
	
	\begin{abstract}
		
		We first determine the automorphism group of the twisted Heisenberg-Virasoro vertex operator algebra $V_{\mathcal{L}}(\ell_{123},0)$.
		Then, for any integer $t>1$, we introduce a new Lie algebra $\mathcal{L}_{t}$, and show that $\sigma_{t}$-twisted $V_{\mathcal{L}}(\ell_{123},0)$($\ell_{2}=0$)-modules are in one-to-one correspondence with restricted $\mathcal{L}_{t}$-modules of level $\ell_{13}$, where $\sigma_{t}$ is an order $t$ automorphism of $V_{\mathcal{L}}(\ell_{123},0)$.
		At the end, we give a complete list of irreducible $\sigma_{t}$-twisted $V_{\mathcal{L}}(\ell_{123},0)$($\ell_{2}=0$)-modules.

	\end{abstract}

	\section{Introduction}
	\def\theequation{1.\arabic{equation}}
	\setcounter{equation}{0}
	
	Let $\mathcal{L}$ be the twisted Heisenberg-Virasoro algebra. It is the universal central extension of the Lie algebra
	of differential operators on a circle of order at most one (cf. \cite{ACKP}):
	$$\{ f(t)\frac{d}{dt}+g(t)\ | \ f(t),g(t)\in\C[t,t^{-1}] \}.$$  $\mathcal{L}$ contains an infinite-dimensional Heisenberg algebra and the Virasoro algebra as subalgebras  (cf. \cite{ACKP}, \cite{B1}). The induced module $V_{\mathcal{L}}(\ell_{123},0)=U(\mathcal{L})\otimes_{U(\mathcal{L}_{(\leq 1)})}\C_{\ell_{123}}$ is a vertex operator algebra of central charge $\ell_{1}$ with conformal vector $\omega=L_{-2}{\bf 1}$ (cf. \cite{GW}). 
	$V_{\mathcal{L}}(\ell_{123},0)$ is a nonrational vertex operator algebra and is not $C_{2}$-cofinite. The structure theory and representation theory of the twisted Heisenberg-Virasoro vertex operator algebra $V_{\mathcal{L}}(\ell_{123},0)$ are closely related to the three scalars $\ell_{1},\ell_{2},\ell_{3}$ (cf. \cite{ACKP}, \cite{AR1}, \cite{AR2}, \cite{B1}, \cite{B2}, \cite{FZ}, \cite{GW}, etc.). 
	%For instance, when $\ell_{3}\neq 0$, $V_{\mathcal{L}}(\ell_{123},0)$ is isomorphic to the tensor product vertex operator algebra $V_{\mathcal{H}}(\ell_{3},0)\otimes V_{\tilde{Vir}}(c_{\tilde{Vir}},0)$, where $V_{\mathcal{H}}(\ell_{3},0)$ is a Heisenberg vertex operator algebra equipped with a nonstandard conformal vector $\omega_{H}=\frac{1}{2\ell_{3}}I_{-1}I_{-1}{\bf 1}+\frac{\ell_{2}}{\ell_{3}}I_{-2}{\bf 1}$ (of central charge $1-12\frac{\ell_{2}^{2}}{\ell_{3}}$),
	%and $V_{\tilde{Vir}}(c_{\tilde{Vir}},0)$ is a Virasoro vertex operator algebra,
	%$\tilde{Vir}$ is a new Virasoro algebra constructed from $\omega-\omega_{H}$ with central charge $c_{\tilde{Vir}}=\ell_{1}-1+12\frac{\ell_{2}^{2}}{\ell_{3}}$ 
	
	Determining the automorphism group $\mbox{Aut}(V)$ of a vertex operator algebra $V$ is an important subject in vertex operator algebra theory. It is related to the orbifold theory which studies the fixed point subalgebras of vertex operator algebras and their modules under certain finite subgroups of the full automorphism groups. The orbifold conjecture says that under some conditions on $V$, every simple $V^{G}$-module is contained in some $g$-twisted $V$-module, where $G$ is a finite subgroup of $\mbox{Aut}(V)$, $g\in G$, $V^{G}$ is the fixed point subalgebra of $V$ under the group $G$.

	The automorphism group of the twisted Heisenberg-Virasoro algebra $\mathcal{L}$ has been studied in \cite{SJ}.
	In this paper, we study the automorphism group of the twisted Heisenberg-Virasoro vertex operator algebra $V_{\mathcal{L}}(\ell_{123},0)$.
	%It is easy to see that there exists an order 2 automorphism of $V_{\mathcal{L}}(\ell_{123},0)$ which is induced from the order 2 automorphism of the Heisenberg vertex operator subalgebra $V_{\mathcal{H}}(\ell_{3},0)$. 
	We know that $V_{\mathcal{L}}(\ell_{123},0)$
	is generated 
	by $\omega=L_{-2}{\bf 1}$ and $I_{-1}{\bf 1}$. By definition, any homomorphism of a vertex operator algebra $(V,Y,{\bf 1}, \omega)$ takes $\omega$ to $\omega$. Therefore, it suffices to determine the action on $I_{-1}{\bf 1}$.  It turns out that automorphisms of the twisted Heisenberg-Virasoro vertex operator algebra $V_{\mathcal{L}}(\ell_{123},0)$ depend on the numbers $\ell_{2}$ and $\ell_{3}$,
	and there exists automorphism of order other than 2, which makes the study of twisted modules for $V_{\mathcal{L}}(\ell_{123},0)$ interesting.
	
	In \cite{GW}, all irreducible modules for the vertex operator algebra
	$V_{\mathcal{L}}(\ell_{123},0)$ are classified: that is, every irreducible module for $V_{\mathcal{L}}(\ell_{123},0)$ is isomorphic to some $L_{\mathcal{L}}(\ell_{123}, h_{1}, h_{2})$, $h_{1},h_{2}\in\C$.
	Here, for any integer $t>1$, we classify $\sigma_{t}$-twisted irreducible modules for $V_{\mathcal{L}}(\ell_{123},0)$, where $\sigma_{t}$ is an order $t$ automorphism of $V_{\mathcal{L}}(\ell_{123},0)$.
	The way we deal with this problem is similar to the one used for vertex algebra modules (cf. \cite{GLTW}, \cite{GW1}, \cite{GW}, \cite{LL}, etc.), but in the context of twisted modules.
	We first introduce another Lie algebra $\mathcal{L}_{t}$. It is a Lie algebra with basis
	$\{L_{n},I_{n+\frac{1}{t}},k_{1},k_{3}\ | \ n\in\Z  \}$, and Lie brackets
	$$
	[L_{m}, L_{n}]=(m-n)L_{m+n}+\delta_{m+n,0}\frac{m^{3}-m}{12}k_{1},  
	$$
	$$
	[L_{m}, I_{n+\frac{1}{t}}]=-(n+\frac{1}{t}) I_{m+n+\frac{1}{t}},   
	$$
	$$
	[I_{m+\frac{1}{t}}, I_{n+\frac{1}{t}}]=(m+\frac{1}{t})\delta_{m+n+\frac{2}{t},0}\delta_{t,2}k_{3},\;\;\; [\mathcal{L}, k_{i}]=0,\; i=1,3.  
	$$
	Note that when $t\neq 2$, $\{I_{n+\frac{1}{t}} \ | \ n\in\Z  \}$ is an abelian Lie algebra. 
	Then we construct irreducible $\mathcal{L}_{t}$-modules $L_{\mathcal{L}_{t}}(\Bbbk_{1},\Bbbk_{3},h)$ as quotient of the induced modules $M_{\mathcal{L}_{t}}(\Bbbk_{1},\Bbbk_{3},h)$,
	where $\Bbbk_{1},\Bbbk_{3},h\in\mathbb{C}$.
	We show that $\sigma_{t}$-twisted $V_{\mathcal{L}}(\ell_{123},0)$-modules ($\ell_{2}=0$) are in one-to-one correspondence with restricted $\mathcal{L}_{t}$-modules of level $\ell_{13}$.
	Using this result, we get a complete list of irreducible $\sigma_{t}$-twisted
	$V_{\mathcal{L}}(\ell_{123},0)$-modules, where $\ell_{2}=0$, $\ell_{1}$, $\ell_{3}\in\C$. 
	Let $V_{\mathcal{L}}(\ell_{123},0)^{\sigma_{t}}$ be the fixed point subalgebra of $V_{\mathcal{L}}(\ell_{123},0)$ under the automorphism $\sigma_{t}$. $V_{\mathcal{L}}(\ell_{123},0)^{\sigma_{t}}$ is a vertex operator subalgebra of $V_{\mathcal{L}}(\ell_{123},0)$.
	It is important and meaningful to study the representations of the fixed point subalgebra $V_{\mathcal{L}}(\ell_{123},0)^{\sigma_{t}}$.
	We remark at the end of the paper that except for the case of order 2, the complete list of irreducible modules for  $V_{\mathcal{L}}(\ell_{123},0)^{\sigma_{t}}$ needs to be further investigated.
	
	This paper is organized as follows. In Section 2, we review the notions and some results of vertex operator algebras, automorphisms and twisted modules for vertex operator algebras.
	In Section 3, we study the automorphism group of the twisted Heisenberg-Virasoro vertex operator algebra $V_{\mathcal{L}}(\ell_{123},0)$. 
	In Section 4, we first study the twisted modules of $V_{\mathcal{L}}(\ell_{123},0)$($\ell_{2}=0$) under an order $t$ automorphism $\sigma_{t}$ for any integer $t>1$.
	Then we give a complete list of irreducible $\sigma_{t}$-twisted $V_{\mathcal{L}}(\ell_{123},0)$($\ell_{2}=0$)-modules.

	\section{Preliminaries}
	\def\theequation{2.\arabic{equation}}
	\setcounter{equation}{0}
	
	For later use, we recall the following result (cf. Proposition 2.3.7 of \cite{LL}).
	\begin{lem}
		\begin{eqnarray}
		(x_{1}-x_{2})^{m}\Big(\frac{\partial}{\partial x_{2}}\Big)^{n}x_{1}^{-1}
		\delta\Big(\frac{x_{2}}{x_{1}}\Big)=0                     \label{eq:2.1}
		\end{eqnarray}
		for $m>n$, $m,n\in\N$, where $\displaystyle\delta\Big(\frac{x_{1}}{x_{2}}\Big)=\sum\limits_{n\in\Z}x_{1}^{n}x_{2}^{-n}$.
	\end{lem}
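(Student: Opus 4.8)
The plan is to reduce everything to the single fundamental property of the formal delta distribution, namely $(x_1-x_2)\,x_1^{-1}\delta(x_2/x_1)=0$, and then bootstrap to the derivative version by induction on $n$. Writing $x_1^{-1}\delta(x_2/x_1)=\sum_{k\in\Z}x_2^{k}x_1^{-k-1}$, the fundamental property is verified at once by a single index shift: multiplying by $(x_1-x_2)$ produces two series that cancel term by term. Throughout I abbreviate $D=x_1^{-1}\delta(x_2/x_1)$.

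For the base case $n=0$ the claim reads $(x_1-x_2)^{m}D=0$ for every $m>0$, and this is immediate upon factoring $(x_1-x_2)^{m}=(x_1-x_2)^{m-1}(x_1-x_2)$ and applying the fundamental property to the last factor. For the inductive step I assume that $(x_1-x_2)^{k}(\partial/\partial x_2)^{n-1}D=0$ for all $k>n-1$. The key device is differentiation: since $(x_1-x_2)^{m}(\partial/\partial x_2)^{n-1}D=0$ already for $m\ge n$, its $x_2$-derivative also vanishes, and the Leibniz rule gives $0=-m(x_1-x_2)^{m-1}(\partial/\partial x_2)^{n-1}D+(x_1-x_2)^{m}(\partial/\partial x_2)^{n}D$. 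This yields the recursion $(x_1-x_2)^{m}(\partial/\partial x_2)^{n}D=m(x_1-x_2)^{m-1}(\partial/\partial x_2)^{n-1}D$, valid for $m\ge n$.

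It then remains to read off the conclusion. For $m>n$, i.e. $m\ge n+1$, the exponent on the right-hand side satisfies $m-1\ge n>n-1$, so the inductive hypothesis forces $(x_1-x_2)^{m-1}(\partial/\partial x_2)^{n-1}D=0$, whence $(x_1-x_2)^{m}(\partial/\partial x_2)^{n}D=0$, completing the induction. I do not expect any serious obstacle here: the only point requiring care is the bookkeeping of the index ranges, namely that the recursion must be applied in the regime $m\ge n$ where the inductive hypothesis is available, while the final vanishing requires the slightly stronger $m\ge n+1$. A direct alternative would be to expand $(\partial/\partial x_2)^{n}D$ into a single series with falling-factorial coefficients and multiply out $(x_1-x_2)^{m}$, but tracking the cancellations over all integers $k$ (including negative ones) is more delicate than the one-line induction, so I would favor the inductive argument.
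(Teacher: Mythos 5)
Your proof is correct. One thing to be aware of: the paper itself gives no proof of this lemma at all --- it is recalled, with a citation, from Proposition 2.3.7 of \cite{LL}, so there is no in-paper argument to compare against. Your induction is essentially the standard proof of that proposition: first verify the fundamental identity $(x_1-x_2)\,x_1^{-1}\delta(x_2/x_1)=0$ by an index shift in the expansion $\sum_{k\in\Z}x_2^{k}x_1^{-k-1}$, then differentiate the (inductively known) identity $(x_1-x_2)^{m}(\partial/\partial x_2)^{n-1}x_1^{-1}\delta(x_2/x_1)=0$ in $x_2$ and use the Leibniz rule to trade one power of $(x_1-x_2)$ for one derivative. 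Your index bookkeeping is exactly right and is the only delicate point: the recursion $(x_1-x_2)^{m}(\partial/\partial x_2)^{n}D=m(x_1-x_2)^{m-1}(\partial/\partial x_2)^{n-1}D$ needs $m\geq n$ so that the identity being differentiated is already known to vanish, while killing its right-hand side needs $m-1>n-1$, i.e. $m\geq n+1$, which is precisely the hypothesis $m>n$; both uses of the inductive hypothesis are legitimate. All formal manipulations (term-by-term differentiation, Leibniz rule for a polynomial times a formal series) are valid in this setting, so the argument is a complete, self-contained replacement for the external citation.
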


	For the definition of vertex (operator) algebra and its modules, we follow \cite{LL}.
	\begin{definition}
		{\em
			A {\em vertex algebra} (V,Y,{\bf 1}) consists of a vector space $V$, a linear map (the vertex operator map)
			$$Y(\cdot, x):V\longrightarrow (End V)[[x,x^{-1}]],\;\;v\mapsto Y(v,x)=\sum_{n\in\Z}v_{n}x^{-n-1},$$
			and a vacuum vector ${\bf 1}$
			such that the following conditions hold for $u,v\in V$:
			\begin{itemize}
				\item [(1)] {\em truncation condition}: $u_{n}v=0$ for $n$ sufficiently large;
				\item[(2)] {\em vacuum property}: $Y({\bf 1},x)=1$;
				\item[(3)] {\em creation property}: $Y(v,x){\bf 1}\in V[[x]]$ and $\lim\limits_{x\rightarrow 0}Y(v,x){\bf 1}=v$;
				\item[(4)] {\em Jacobi identity}: 
				\begin{eqnarray*}
					&&x_{0}^{-1}\delta\left(\frac{x_{1}-x_{2}}{x_{0}}\right)Y(u,x_{1})Y(v,x_{2}) -
					x_{0}^{-1}\delta\left(\frac{x_{2}-x_{1}}{-x_{0}}\right)Y(v,x_{2})Y(u,x_{1})\\
					&&\hspace{2cm}= x_{2}^{-1}\delta\left(\frac{x_{1}-x_{0}}{x_{2}}\right)Y(Y(u,x_{0})v,x_{2}).
				\end{eqnarray*}
			\end{itemize}
		}
	\end{definition}
	
	Let $D$ be the endomorphism of the vertex algebra $V$ defined by 
	$D(v)=v_{-2}{\bf 1}$ for $v\in V$. Then we have $Y(Dv,x)=\displaystyle\frac{d}{dx}Y(v,x)$.
	
	\begin{definition}
		{\em  A {\em vertex operator algebra} is a $\Z$-graded vector space (graded by weights)
			$$V=\coprod_{n\in\Z}V_{(n)}\;\;\mbox{for}\;v\in V_{(n)}, n=wt\;v,$$
			equipped with a vertex algebra structure $(V,Y,{\bf 1})$ and a distinguished homogeneous vector $\omega$ (the {\em conformal vector}) of weight 2 ($\omega\in V_{(2)}$) such that
			\begin{itemize}
				\item [(1)] {\em two grading restrictions:} 
				$$\mbox{dim}\; V_{(n)}<\infty\;\;\;\;\mbox{for}\;n\in\Z,$$
				$$V_{(n)}=0\;\;\;\;\mbox{for}\; n\; \mbox{sufficiently negative};$$
				\item [(2)] {\em Virasoro algebra relations:}
				$$[L(m), L(n)]=(m-n)L(m+n)+\frac{1}{12}(m^{3}-m)\delta_{m+n,0}c_{V}$$
				for $m,n\in\Z$, where
				$$Y(\omega,x)=\sum_{n\in\Z}L(n)x^{-n-2} (=\sum_{n\in\Z}\omega_{n}x^{-n-1});$$
				and $c_{V}\in\C$ ({\em central charge or rank} of $V$);
				\item[(3)]  {\em $L(0)$-grading:}
				$L(0)v=nv=(wt\;v)v$\;\;\;\;for $n\in\Z$ and $v\in V_{(n)}$;
				\item[(4)] {\em $L(-1)$-derivative property:} $Y(L(-1)v,x)=\displaystyle\frac{d}{dx}Y(v,x)$.
			\end{itemize}
			
		}
	\end{definition}
	
	It can be proved that ${\bf 1}\in V_{(0)}$ and $D=L(-1)$.
	
	\begin{definition}
		{\em	An {\em automorphism} of a vertex operator algebra $(V,Y,\omega,{\bf 1})$ is a linear isomorphism $\sigma$ of $V$ 
			such that 
			$$\sigma({\bf 1})={\bf 1},\;\;\sigma(\omega)=\omega \;\;\mbox{and}\;\; \sigma(u_{n}v)=\sigma(u)_{n}\sigma(v)$$
			for any $u,v\in V$, $n\in\mathbb{Z}$.
			
		}
	\end{definition}

	The group of all automorphisms of a vertex operator algebra $V$ is denoted by $\mbox{Aut}(V)$. Any automorphism of a vertex operator algebra $V$ is {\em grading-preserving}, i.e. it preserves each homogeneous subspace $V_{(n)}$ of $V$, $n\in\Z$.
	Let $\sigma$ be an order $t$ automorphism of a vertex operator algebra $V$, $t$ is a positive integer. Then $\sigma$ acts semisimply on $V$. Therefore
	$$V=V^{0}\oplus V^{1}\oplus \cdots\oplus V^{t-1}$$
	where $V^{k}$ is the eigenspace of $V$ for $\sigma$ with eigenvalues $\eta^{k}$, where $\eta=\mbox{exp}(\frac{2\pi \sqrt{-1}}{t})$, $k=0,\ldots,t-1$.
	It is easy to see that the fixed points set $V^{0}:=V^{\sigma}=\{v\in V\ | \ \sigma(v)=v\}$
	is a vertex operator subalgebra of $V$
	
	Now we recall some notions regarding to twisted modules from \cite{L1}.
	\begin{definition}
		{\em Let $(V, {\bf 1}, Y)$ be a vertex algebra with an automorphism $\sigma$ of order $t$.
			A {\em $\sigma$-twisted $V$-module} is a triple $(W,d,Y_{W})$ consisting of a vector space $W$, an endomorphism 
			$d$ of $W$ and a linear map 
			$$Y_{W}(\cdot, z):V \longrightarrow (\mbox{End} W)[[z^{\frac{1}{t}}, z^{-\frac{1}{t}}]]$$
			satisfying the following conditions: 
			\begin{itemize}
				\item [(TW1)] 	$\mbox{For any } v\in V, w\in W$, $v_{n}w=0\;\mbox{for}\; n\in\frac{1}{t}\Z\;\mbox{sufficiently large};$
				\item [(TW2)] $Y_{W}(\textbf{1},z)=Id_{W};$ 
				\item [(TW3)] 	$[d, Y_{W}(v,z)]=Y_{W}(D(v),z)=\displaystyle\frac{d}{dz}Y_{W}(v,z)\;\mbox{for any}\; v\in V;$ 
				\item [(TW4)] For any $u,v\in V$, the following {\em$\sigma$-twisted Jacobi identity} holds:
				\begin{eqnarray*}
					&&z_{0}^{-1}\delta\left(\frac{z_{1}-z_{2}}{z_{0}}\right)Y_{W}(u,z_{1})Y_{W}(v,z_{2}) -
					z_{0}^{-1}\delta\left(\frac{z_{2}-z_{1}}{-z_{0}}\right)Y_{W}(v,z_{2})Y_{W}(u,z_{1})\\
					&&\hspace{2cm}= z_{2}^{-1}\frac{1}{t}\sum_{k=0}^{t-1}\delta\left(\left(\frac{z_{1}-z_{0}}{z_{2}}\right)^{\frac{1}{t}}\right)Y_{W}(Y(\sigma^{k}u,z_{0})v,z_{2}).
				\end{eqnarray*}
			\end{itemize}
		}
	\end{definition}
	
	If $V$ is a vertex operator algebra, a $\sigma$-twisted $V$-module for $V$ as a vertex algebra is called a 
	{\em $\sigma$-twisted weak module for $V$} as a vertex operator algebra.

	\begin{definition}
		{\em For $V$ a vertex operator algebra, a {\em $\sigma$-twisted $V$-module} $W$ is a $\sigma$-twisted weak module for $V$ as a vertex algebra
			and $W=\coprod\limits_{h\in \C}W_{(h)}$ such that
			\begin{itemize}
				\item [(TW5)]  $L(0)w=h w\;\;\;\;\mbox{for}\; h\in\C, w\in W_{(h)};$
				\item [(TW6)]  $\mbox{For any fixed } h, W_{(h+n)}=0\;\mbox{for} \;n\in \frac{1}{t}\Z\;\mbox{sufficiently small};$
				\item [(TW7)]  $\mbox{dim}\; W_{(h)}<\infty\;\;\mbox{for any} \;h\in\C.$
			\end{itemize}
		}
	\end{definition}

	\begin{rmk}
		\label{irrtwi}
		{\em
			Let $W$ be a $\sigma$-twisted $V$-module. Then 
			\begin{eqnarray}
			[L(0),Y_{W}(v,z)]=zY_{W}(L(-1)v,z)+Y_{W}(L(0)v,z)
			\end{eqnarray}
			for $v\in V$. Hence, if $v\in V$ is homogeneous, 
			\begin{eqnarray*}
				v_{n}W_{(h)}\subseteq W_{(h+ wt\;v-n-1)}\;\;\mbox{for}\; n\in\frac{1}{t}\Z,h\in\C.
			\end{eqnarray*}
			It follows that a $\sigma$-twisted $V$-module $W$ decomposes into twisted submodules corresponding to the congruence classes mod $\frac{1}{t}\Z$: For $h\in\mathbb{C}/\frac{1}{t}\Z$, let
			\begin{eqnarray}
			W_{[h]}=\coprod_{\alpha+\frac{1}{t}\Z=h}W_{(\alpha)}.
			\end{eqnarray}
			Then
			\begin{eqnarray}
			W=\coprod_{h\in\C/\frac{1}{t}\Z}W_{[h]}.
			\end{eqnarray}
			In particular, if $W$ is irreducible, then
			\begin{eqnarray}
			W=W_{[h]}
			\end{eqnarray}	
			for some $h$.
		}
	\end{rmk}
	
	\begin{rmk}
		{\em Let $W$ be a $\sigma$-twisted $V$-module.
			For $u\in V^{k}$, $v\in V$, there is the following {\em twisted iterate formula} (cf. (2.32) of \cite{L1}) 
			\begin{eqnarray}
			Y_{W}(Y(u,z_{0})v,z_{2})=\mbox{Res}_{z_{1}}\left(\frac{z_{1}-z_{0}}{z_{2}}\right)^{\frac{k}{t}}\cdot X
			\end{eqnarray}
			where
			\begin{eqnarray*}
				X=z_{0}^{-1}\delta\left(\frac{z_{1}-z_{2}}{z_{0}}\right)Y_{W}(u,z_{1})Y_{W}(v,z_{2}) -
				z_{0}^{-1}\delta\left(\frac{z_{2}-z_{1}}{-z_{0}}\right)Y_{W}(v,z_{2})Y_{W}(u,z_{1}).
			\end{eqnarray*}
			Then it can be easily deduced that for any $n\in\Z$,
			\begin{eqnarray}
			&&{}Y_{W}(u_{n}v,z)=
			\mbox{Res}_{z_{1}}\sum_{j=0}^{\infty}(-1)^{j}
			\binom{\frac{k}{t}}{j}z_{1}^{\frac{k}{t}-j}z^{-\frac{k}{t}}\cdot Y_{j}  \label{iter}
			\end{eqnarray}
			where
			\begin{eqnarray*}
				Y_{j}=\Big((z_{1}-z)^{n+j}Y_{W}(u,z_{1})Y_{W}(v,z)-
				(-z+z_{1})^{n+j}Y_{W}(v,z)Y_{W}(u,z_{1})\Big),  
			\end{eqnarray*}
			$\displaystyle\binom{\frac{k}{t}}{j}=\displaystyle\frac{\frac{k}{t}(\frac{k}{t}-1)\cdots(\frac{k}{t}-j+1)}{j!}$.
			Note that when $k=0$, $\displaystyle\frac{k}{t}=0$, we have $j=0$, and then (\ref{iter}) is the usual formula for (untwisted) $V$-modules (cf. (3.8.16) of \cite{LL}).
		}
	\end{rmk}
	
	\begin{definition}
		{\em 
			A {\em homomorphism} between two $\sigma$-twisted weak $V$-modules $M$ and $W$ is a linear map $f:M\longrightarrow W$ such that for any $v\in V$,
			\begin{eqnarray}
			f Y_{M}(v,z)=Y_{W}(v,z)f.
			\end{eqnarray}
			If $V$ is a vertex operator algebra, then a $V$-module homomorphism $f$ is compatible with the gradings:
			\begin{eqnarray}
			f(M_{(h)})\subseteq W_{(h)}\;\;\mbox{for}\;h\in\C.
			\end{eqnarray}
		}
	\end{definition}
	
	Using the formula (\ref{iter}), similarly as Proposition 4.5.1 of \cite{LL}, there is the following result.
	\begin{prop}
		\label{hom}
		Let $W_{1}$ and $W_{2}$ be $\sigma$-twisted $V$-modules and let $\psi\in Hom_{\C}(W_{1},W_{2})$. Suppose that
		$$Y(a,z)\psi=\psi Y(a,z),\;\;\mbox{for } a\in S,$$
		where $S$ is a given generating set of $V$. Then $\psi$ is a  $\sigma$-twisted $V$-module homomorphism.	
	\end{prop}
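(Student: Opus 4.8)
The plan is to follow the strategy of Proposition 4.5.1 of \cite{LL}, with the twisted iterate formula (\ref{iter}) playing the role of the ordinary iterate formula. Here the relation ``$Y(a,z)\psi=\psi Y(a,z)$'' is read as $Y_{W_{2}}(a,z)\psi=\psi Y_{W_{1}}(a,z)$, and I introduce the subspace
$$T=\Big\{v\in V\ \Big|\ Y_{W_{2}}(v,z)\psi=\psi Y_{W_{1}}(v,z)\Big\}\subseteq V.$$
The goal is to prove $T=V$. Linearity makes $T$ a subspace; the vacuum property (TW2) gives $Y_{W_{i}}(\mathbf{1},z)=\mathrm{Id}$, so $\mathbf{1}\in T$; and $S\subseteq T$ by hypothesis. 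Since $V$ is generated by $S$, it suffices to show that $T$ is closed under all products, that is, $u_{n}v\in T$ for all $u,v\in T$ and $n\in\Z$. Indeed, this forces $T$ to contain every iterated product $a^{(1)}_{n_{1}}\cdots a^{(r)}_{n_{r}}\mathbf{1}$ with $a^{(i)}\in S$, hence all of $V$.

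First I would observe that $T$ respects the eigenspace decomposition $V=V^{0}\oplus\cdots\oplus V^{t-1}$ of $\sigma$. For $v=\sum_{k}v^{(k)}$ with $v^{(k)}\in V^{k}$, the fractional powers of $z$ occurring in the twisted operator $Y_{W_{i}}(v^{(k)},z)$ fill a single coset of $\Z$ in $\frac{1}{t}\Z$ determined by $k$, and distinct $k\in\{0,\ldots,t-1\}$ give distinct cosets. Hence, upon matching powers of $z$, the identity $Y_{W_{2}}(v,z)\psi=\psi Y_{W_{1}}(v,z)$ separates into the component identities $Y_{W_{2}}(v^{(k)},z)\psi=\psi Y_{W_{1}}(v^{(k)},z)$. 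Therefore $v\in T$ implies $v^{(k)}\in T$ for each $k$, and it is enough to establish closure when $u\in V^{k}\cap T$.

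The main step is then closure for homogeneous $u$. Fix $u\in V^{k}\cap T$ and $v\in T$, and apply (\ref{iter}) in $W_{2}$ to write $Y_{W_{2}}(u_{n}v,z)$ as $\mathrm{Res}_{z_{1}}\sum_{j}(-1)^{j}\binom{\frac{k}{t}}{j}z_{1}^{\frac{k}{t}-j}z^{-\frac{k}{t}}Y_{j}$, where $Y_{j}$ is built from $Y_{W_{2}}(u,z_{1})$ and $Y_{W_{2}}(v,z)$. Using $u,v\in T$ I would push $\psi$ through each ordered product,
$$Y_{W_{2}}(u,z_{1})Y_{W_{2}}(v,z)\psi=Y_{W_{2}}(u,z_{1})\psi\,Y_{W_{1}}(v,z)=\psi\,Y_{W_{1}}(u,z_{1})Y_{W_{1}}(v,z),$$
and similarly for the opposite-ordered term, so that $Y_{j}$ itself intertwines $\psi$. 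As the scalar coefficients $(-1)^{j}\binom{\frac{k}{t}}{j}z_{1}^{\frac{k}{t}-j}z^{-\frac{k}{t}}$ and the formal residue do not involve $\psi$, this yields $Y_{W_{2}}(u_{n}v,z)\psi=\psi Y_{W_{1}}(u_{n}v,z)$, i.e. $u_{n}v\in T$. Combined with the eigenspace reduction, this gives closure for arbitrary $u,v\in T$, hence $T=V$.

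The step I expect to require the most care is this last computation: justifying that $\psi$ may be moved across the doubly infinite products $Y_{W_{i}}(u,z_{1})Y_{W_{i}}(v,z)$ and through the formal residue in $z_{1}$. This relies on the twisted truncation condition (TW1), which guarantees that the products are well defined and that the coefficient-wise manipulations are legitimate. The genuinely twisted feature is the presence of the fractional powers $z_{1}^{\frac{k}{t}-j}$; these are exactly what formula (\ref{iter}) is designed to accommodate, which is why the untwisted argument of \cite{LL} transfers once (\ref{iter}) is used in place of the ordinary iterate formula.
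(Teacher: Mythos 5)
Your proof is correct and is essentially the argument the paper intends: the paper offers no written-out proof, saying only that the result follows from formula (\ref{iter}) ``similarly as Proposition 4.5.1 of \cite{LL}'', and your intertwining-set argument (the subspace $T$ containing ${\bf 1}$ and $S$, closed under all products $u_{n}v$ via the twisted iterate formula) is precisely that argument carried out in the twisted setting. The $\sigma$-eigenspace reduction you insert---needed because (\ref{iter}) only applies to $u\in V^{k}$---is a genuine twisted-case detail that the paper leaves implicit, and you justify it correctly via the coset-support property of twisted vertex operators.
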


	In the following, we review from Section 3 of \cite{L1} the local systems of twisted vertex operators. 
	
	\begin{definition}
		{\em	Let $W$ be a vector space, let $t$ be a fixed positive integer. A {\em $\Z_t$-twisted weak vertex operator on $W$}
			is a formal series $a(z)=\sum_{n\in\frac{1}{t}\Z}a_{n}z^{-n-1}\in (\mbox{End}\; W)[[z^{\frac{1}{t}}, z^{-\frac{1}{t}}]]$ such that for any $w\in W$, $a_{n}w=0$ for $n\in\frac{1}{t}\Z$ sufficiently large.
		}
	\end{definition}
	
	\begin{definition}
		{\em Two $\Z_t$-twisted weak vertex operators $a(z)$ and $b(z)$ are said to be {\em mutually local} if there is a positive integer $n$
			such that
			$$(z_{1}-z_{2})^{n}a(z_{1})b(z_{2})=(z_{1}-z_{2})^{n}b(z_{2})a(z_{1}).$$
			A $\Z_t$-twisted weak vertex operator is called a {\em $\Z_t$-twisted vertex operator} if it is local with itself.
			
		}
	\end{definition}
	
	Denote by $F(W,t)$ the space of all $\Z_{t}$-twisted weak vertex operators on $W$. 
	Let $\sigma$ be the endomorphism of $(\mbox{End}\; W)[[z^{\frac{1}{t}},z^{-\frac{1}{t}}]]$ defined by:
	$\sigma f(z^{\frac{1}{t}})=f(\eta^{-1}z^{\frac{1}{t}})$.
	Denote by $F(W,t)^{k}=\{f(z)\in F(W,t)\ | \ \sigma f(z)=\eta^{k}f(z)\}$ for $0\leq k\leq t-1$.
	For any mutually local $\Z_t$-twisted vertex operators $a(z), b(z)$ on $W$, define $a(z)_{n}b(z)$
	as follows (cf. Definition 3.7 of \cite{L1}).
	
	\begin{definition}
		{\em
			Let $W$ be a vector space and let $a(z)$ and $b(z)$ be mutually local $\Z_t$-twisted vertex operators on $W$ such that $a(z)\in F(W,t)^{k}$. Then for any integer $n$ we define $a(z)_{n}b(z)$ as an element of $F(W,t)$ as follows:
			\begin{eqnarray}
			a(z)_{n}b(z)=Res_{z_{1}}Res_{z_{0}}\left(\frac{z_{1}-z_{0}}{z}\right)^{\frac{k}{t}}z_{0}^{n}\cdot X
			\end{eqnarray}
			where
			$$X=z_{0}^{-1}\delta\left(\frac{z_{1}-z}{z_{0}}\right)a(z_{1})b(z)-z_{0}^{-1}\delta\left(\frac{z-z_{1}}{-z_{0}}\right)b(z)a(z_{1}).$$
		}
	\end{definition}
	
	For any set $S$ of mutually local $\Z_{t}$-twisted vertex operators on $W$, by Zorn's Lemma, there exists a local system $A$ of $\Z_{t}$-twisted vertex operators on $W$ (cf. Section 3 of \cite{L1}). Denote by
	$\langle S\rangle$ the vertex algebra generated by $S$ inside $A$ via the operations $a(z)_{n}b(z)$, $n\in\Z$.
	Furthermore, there is the following result (cf. Corollary 3.15 of \cite{L1}).
	\begin{prop}
		\label{gen}
		Let $W$ be any vector space. Let $S$ be a set of mutually local	$\Z_{t}$-twisted vertex operators on $W$. Then $\langle S\rangle$ is a vertex algebra with an automorphism $\sigma$ of order $t$ such that $W$ is a $\sigma$-twisted $\langle S\rangle$-module in the sense $Y_{W}(a(z),z_{1})=a(z_{1})$.
	\end{prop}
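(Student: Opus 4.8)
The plan is to equip the ambient maximal local system with a vertex algebra structure, restrict it to $\langle S\rangle$, and then read off the $\sigma$-twisted module structure on $W$; essentially all of the work sits in the vertex algebra axioms, and within them in the Jacobi identity.

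First I would fix, via Zorn's Lemma, a maximal local system $A$ of $\Z_t$-twisted vertex operators on $W$ with $S\subseteq A$. Since the identity operator $\mathrm{Id}_W$ is mutually local with every $\Z_t$-twisted vertex operator, we may take $\mathrm{Id}_W\in A$; it will serve as the vacuum. The first lemma to prove is \emph{closure}: for mutually local $a(z),b(z)$ with $a(z)\in F(W,t)^k$ and any $n\in\Z$, the series $a(z)_n b(z)$ is again a $\Z_t$-twisted weak vertex operator — truncation being immediate from the residue definition together with the truncation of $a(z)$ and $b(z)$ — and it is mutually local with every element of $A$, so by maximality $a(z)_n b(z)\in A$. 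I would also record the grading compatibility $a(z)_n b(z)\in F(W,t)^{k+l}$ whenever $a(z)\in F(W,t)^k$ and $b(z)\in F(W,t)^l$ (indices mod $t$): applying the substitution $z^{1/t}\mapsto\eta^{-1}z^{1/t}$ to the defining formula, the free variable $z$ enters only through $b(z)$ and through the factor $z^{-k/t}$ in $(\tfrac{z_1-z_0}{z})^{k/t}$, contributing $\eta^{l}$ and $\eta^{k}$ respectively.

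Next, the main step is to prove that $(A,Y,\mathrm{Id}_W)$ is a vertex algebra, with $Y(a(z),z_0)b(z)=\sum_{n\in\Z}(a(z)_n b(z))z_0^{-n-1}$. The vacuum axiom reduces to $(\mathrm{Id}_W)_n b(z)=\delta_{n,-1}b(z)$ and the creation axiom to $a(z)_n\,\mathrm{Id}_W=0$ for $n\geq 0$ together with $a(z)_{-1}\,\mathrm{Id}_W=a(z)$; both are short $\delta$-function computations, and $a(z)_{-2}\,\mathrm{Id}_W=\tfrac{d}{dz}a(z)$ identifies the canonical derivation $D$ with $\tfrac{d}{dz}$. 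The crux is the Jacobi identity, which I would obtain from locality in the standard two stages: a commutator formula for the modes $a(z)_m$ and an iterate (weak associativity) formula, each derived by multiplying the product of operators by a high power $(z_1-z_2)^N$ so as to invoke $(z_1-z_2)^N a(z_1)b(z_2)=(z_1-z_2)^N b(z_2)a(z_1)$ and then rearranging the resulting $\delta$-functions — Lemma \ref{eq:2.1} being exactly the tool that collapses the products of $\delta$-functions and their derivatives which arise. \emph{This is the main obstacle}: all of the subtlety of the twisting lies in keeping the fractional powers of $z$ single-valued throughout these $\delta$-function manipulations, even though the generating-function operations are themselves indexed by integers $n$.

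With $A$ a vertex algebra, $\langle S\rangle$ is the smallest subspace containing $S$ and $\mathrm{Id}_W$ closed under all operations $a(z)_n b(z)$ and under $\tfrac{d}{dz}$, hence a vertex subalgebra. The substitution operator $\sigma$ is a linear automorphism of $F(W,t)$ of order $t$ (as $\eta^t=1$) fixing $\mathrm{Id}_W$, and the grading compatibility above gives $\sigma(a(z)_n b(z))=\sigma(a(z))_n\,\sigma(b(z))$, so $\sigma$ is a vertex algebra automorphism of $A$ of order $t$; taking $S$ to be $\sigma$-stable — which one may arrange by replacing it with $\bigcup_{i=0}^{t-1}\sigma^i S$, mutual locality being preserved by $\sigma$ — makes $\langle S\rangle$ $\sigma$-stable, so $\sigma$ restricts to an order $t$ automorphism of it. Finally I would verify that $W$ is a $\sigma$-twisted $\langle S\rangle$-module under $Y_W(a(z),z_1)=a(z_1)$: (TW1) and (TW2) are immediate, and (TW3) is the derivative property, immediate from $Y_W(a(z),z_1)=a(z_1)$ and $D=\tfrac{d}{dz}$. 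The content is (TW4): substituting $Y_W(a(z),z_1)=a(z_1)$ into the definition of $a(z)_n b(z)$ and summing over the $\sigma$-eigencomponents of $a(z)$, the fractional factors $(\tfrac{z_1-z_0}{z_2})^{k/t}$ for $k=0,\dots,t-1$ assemble into the averaged delta $\tfrac1t\sum_{k=0}^{t-1}\delta\big((\tfrac{z_1-z_0}{z_2})^{1/t}\big)$ of the $\sigma$-twisted Jacobi identity, so the defining formula for the operations is precisely that identity read on $W$.
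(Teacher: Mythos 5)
The paper does not actually prove Proposition \ref{gen}: as the sentence introducing it indicates, it is quoted from Corollary 3.15 of \cite{L1}, so the only proof to compare yours against is Li's. Your plan is essentially a faithful reconstruction of that proof, with the same ingredients in the same order: embed $S$ in a maximal local system $A$ by Zorn's Lemma with $\mathrm{Id}_W$ as vacuum; prove closure of $A$ under the products $a(z)_{n}b(z)$ (the twisted analogue of Dong's lemma); derive the Jacobi identity for $A$ from locality via commutator and iterate formulas, with (\ref{eq:2.1}) collapsing the resulting delta-function expressions; check that the substitution operator $\sigma$ is an order $t$ automorphism compatible with the products; and read off the $\sigma$-twisted Jacobi identity on $W$ from the very definition of $a(z)_{n}b(z)$ after summing over eigencomponents. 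So this is the ``same approach'' case, and the plan is sound.

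Two caveats are worth recording. First, the $\sigma$-stability of $\langle S\rangle$, which you repair by replacing $S$ with $\bigcup_{i=0}^{t-1}\sigma^{i}S$, is a genuine issue when $S$ contains non-homogeneous operators: the products are defined through eigencomponents, and the subalgebra generated by $S$ alone need not be $\sigma$-stable. Li's framework absorbs this because a maximal local system automatically contains all eigencomponents of its members (since $\sigma^{j}a(z)$ is local with everything $a(z)$ is local with), and your fix amounts to generating from those eigencomponents; for the paper's application in Theorem \ref{1} the point is vacuous, as the generating set $\{L(z),\,I_{\sigma_{t}}(z),\,{\bf 1}_{W}\}$ already consists of $\sigma$-eigenvectors. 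Second, the two steps you flag as the main obstacles --- locality of $a(z)_{n}b(z)$ with all of $A$, and the twisted Jacobi identity with its fractional-power bookkeeping --- are precisely the technical core of Section 3 of \cite{L1}; a complete write-up would have to carry them out, since they do not follow formally from the untwisted case. A final minor point: the paper's definition of a $\sigma$-twisted module includes an endomorphism $d$ of $W$ satisfying (TW3), and neither your argument nor the statement of Proposition \ref{gen} (which asserts the module structure only ``in the sense $Y_{W}(a(z),z_{1})=a(z_{1})$'') produces such a $d$; what comes for free is only $Y_{W}(Da(z),z_{1})=\frac{d}{dz_{1}}Y_{W}(a(z),z_{1})$. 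That imprecision is inherited from how the result is quoted and is not specific to your proposal.
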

	
	\section{Automorphism group}
	\def\theequation{3.\arabic{equation}}
	\setcounter{equation}{0}
	
	In this section, we first recall the definition of the twisted Heisenberg-Virasoro algebra $\mathcal{L}$ and the construction of the twisted Heisenberg-Virasoro vertex operator algebra $V_{\mathcal{L}}(\ell_{123},0)$. Then we determine the automorphism group of the vertex operator algebra $V_{\mathcal{L}}(\ell_{123},0)$.
	
	Recall the definition of
	the twisted Heisenberg-Virasoro algebra $\mathcal{L}$ (see \cite{ACKP} or \cite{B1}).
	
	\begin{definition}\label{Twisted Heisenberg-Virasoro algebra}
		{\em The twisted Heisenberg-Virasoro algebra $\mathcal{L}$ is a Lie algebra with basis
			$\{L_{n},I_{n},c_{1},c_{2},c_{3}\ | \ n\in\Z  \}$, and the following Lie brackets:
			\begin{eqnarray}
			[L_{m}, L_{n}]=(m-n)L_{m+n}+\delta_{m+n,0}\frac{m^{3}-m}{12}c_{1},  \label{eq:2.4}
			\end{eqnarray}
			\begin{eqnarray}
			[L_{m}, I_{n}]=-n I_{m+n}-\delta_{m+n,0}(m^{2}+m)c_{2},   \label{eq:2.5}
			\end{eqnarray}
			\begin{eqnarray}
			[I_{m}, I_{n}]=m\delta_{m+n,0}c_{3},\;\;\; [\mathcal{L}, c_{i}]=0,\; i=1,2,3.   \label{eq:2.6}
			\end{eqnarray}
		}
	\end{definition}
	
	Clearly, $\mbox{Span}\{L_{n},\;c_{1}\ | \ n\in\Z\}$ is a Virasoro algebra, $\mbox{Span}\{I_{n},\;c_{3}\ | \ n\in\Z \backslash\{0\}\}$ is
	an infinite-dimensional Heisenberg algebra, we denote them by $Vir$, $\mathcal{H}$ respectively.
	
	Let
	$$L(z)=\sum\limits_{n\in\Z}L_{n}z^{-n-2},\;\; I(z)=\sum\limits_{n\in\Z}I_{n}z^{-n-1},$$
	then the defining relations of $\mathcal{L}$ become to be
	\begin{eqnarray}
	&&{}[L(z_{1}), L(z_{2})]  \nonumber \\
	&&{}=\sum\limits_{m,n\in\Z}(m-n)L_{m+n}z_{1}^{-m-2}z_{2}^{-n-2}+\sum\limits_{m\in\Z}\frac{m^{3}-m}{12}c_{1}z_{1}^{-m-2}z_{2}^{m-2} \nonumber \\
	&&{}=\displaystyle\frac{d}{dz_{2}}\Big(L(z_{2})\Big)z_{1}^{-1}\delta\left(\frac{z_{2}}{z_{1}}\right)
	+ 2L(z_{2})\frac{\partial}{\partial z_{2}}z_{1}^{-1}\delta\left(\frac{z_{2}}{z_{1}}\right)
	\nonumber \\
	&&{}\;\;\;\;
	+\frac{c_{1}}{12}\left(\frac{\partial}{\partial z_{2}}\right)^{3}z_{1}^{-1}\delta\left(\frac{z_{2}}{z_{1}}\right),    \label{eq:2.7}
	\end{eqnarray}
	
	\begin{eqnarray}
	&&{}[L(z_{1}), I(z_{2})]  \nonumber \\
	&&{}=-\sum\limits_{m,n\in\Z}n I_{m+n}z_{1}^{-m-2}z_{2}^{-n-1}-\sum\limits_{m\in\Z}(m^{2}+m)c_{2}z_{1}^{-m-2}z_{2}^{m-1}  \nonumber \\
	&&{}=\displaystyle\frac{d}{dz_{2}}\Big(I(z_{2})\Big)z_{1}^{-1}\delta\left(\frac{z_{2}}{z_{1}}\right)
	+ I(z_{2})\frac{\partial}{\partial z_{2}}z_{1}^{-1}\delta\left(\frac{z_{2}}{z_{1}}\right)
	\nonumber\\
	&&{}\;\;\;\;
	-\left(\frac{\partial}{\partial z_{2}}\right)^{2}z_{1}^{-1}\delta\left(\frac{z_{2}}{z_{1}}\right)c_{2},    \label{eq:2.8}
	\end{eqnarray}
	\begin{eqnarray}
	[I(z_{1}), I(z_{2})]
	=\sum\limits_{m\in\Z}m c_{3}z_{1}^{-m-1}z_{2}^{m-1}
	=\frac{\partial}{\partial z_{2}}z_{1}^{-1}\delta\left(\frac{z_{2}}{z_{1}}\right)c_{3}.  \label{eq:2.9}
	\end{eqnarray}

	We recall the construction of the twisted Heisenberg-Virasoro vertex operator algebra $V_{\mathcal{L}}(\ell_{123},0)$ from \cite{GW}.
	Let
	\begin{eqnarray*}
		\mathcal{L}_{(\leq 1)}=\coprod_{n\leq 1}\C L_{-n}\oplus\coprod_{n\leq 0}\C I_{-n}\oplus\sum_{i=1}^{3}\C c_{i},  \label{eq:2.10}
	\end{eqnarray*}
	\begin{eqnarray*}
		\mathcal{L}_{(\geq 2)}=\coprod_{n\geq 2}\C L_{-n}\oplus\coprod_{n\geq 1}\C I_{-n}.       \label{eq:2.11}
	\end{eqnarray*}
	They are graded subalgebras of $\mathcal{L}$
	and 
	\begin{eqnarray*}
		\mathcal{L}=\mathcal{L}_{(\leq 1)}\oplus \mathcal{L}_{(\geq 2)}.                  \label{eq:2.12}
	\end{eqnarray*}

	Let $\ell_{i},i=1,2,3,$ be any complex numbers. Consider $\C$ as an $\mathcal{L}_{(\leq 1)}$-module with $c_{i}$ acting
	as the scalar $\ell_{i},i=1,2,3,$ and with $\coprod_{n\leq 1}\C L_{-n}\oplus\coprod_{n\leq 0}\C I_{-n}$
	acting trivially. Denote this $\mathcal{L}_{(\leq 1)}$-module by $\C_{\ell_{123}}$.
	Form the induced module
	\begin{eqnarray}
	V_{\mathcal{L}}(\ell_{123},0)=U(\mathcal{L})\otimes_{U(\mathcal{L}_{(\leq 1)})}\C_{\ell_{123}},   \label{eq:2.13}
	\end{eqnarray}
	where $U(\cdot)$ denotes the universal enveloping algebra of a Lie algebra. Set ${\bf 1} =1\otimes 1\in V_{\mathcal{L}}(\ell_{123},0)$.
	$V_{\mathcal{L}}(\ell_{123},0)$ is a vertex operator algebra with vacuum vector ${\bf 1}$ %constructed as the induced module from the twisted 
	%Heisenberg-Virasoro Lie algebra with $c_{i}$ acting as a scalar $\ell_{i}$(\cite{GW}), $i=1,2,3$, and
	and conformal vector $\omega=L_{-2}{\bf 1}$.
	And $\{\omega=L_{-2}{\bf 1},I:=I_{-1}{\bf 1}\}$ is a generating subset of
	$V_{\mathcal{L}}(\ell_{123},0)$.
	%Let $L_{\mathcal{L}}(\ell_{123},0)$ be its simple quotient.
	Recall the grading on $V_{\mathcal{L}}(\ell_{123},0)$:
	\begin{eqnarray*}
		V_{\mathcal{L}}(\ell_{123},0)=\coprod_{n\geq 0}V_{\mathcal{L}}(\ell_{123},0)_{(n)},
	\end{eqnarray*}
	where $V_{\mathcal{L}}(\ell_{123},0)_{(0)}=\C$ and $V_{\mathcal{L}}(\ell_{123},0)_{(n)}$, $n\geq 1$, has a basis consisting of the vectors
	$$I_{-k_{1}}\cdots I_{-k_{s}}L_{-m_{1}}\cdots L_{-m_{r}}{\bf{1}} $$
	for
	$r, s\geq 0$,
	$m_{1}\geq\cdots\geq m_{r}\geq 2$, $k_{1}\geq\cdots\geq k_{s}\geq 1$ with $\sum\limits_{i=1}^{r}m_{i}+\sum\limits_{j=1}^{s}k_{j}=n.$
	
	Now we give our first main result. The automorphism group of $V_{\mathcal{L}}(\ell_{123},0)$ is determined in the following theorem.
	\begin{thm} 
		\label{autogp}
		\begin{itemize}
			\item [(1)] If $\ell_{2}\neq 0$, then $Aut(V_{\mathcal{L}}(\ell_{123},0))=\{id\}.$
			\item [(2)] If $\ell_{2}= 0$ and $\ell_{3}\neq 0$, then 
			$Aut(V_{\mathcal{L}}(\ell_{123},0))\cong \mathbb{Z}_{2}$.
			\item [(3)] If both $\ell_{2}$ and $\ell_{3}$ are 0, then $Aut(V_{\mathcal{L}}(\ell_{123},0))\cong\mathbb{C}^{\times}=\mathbb{C}\backslash \{0\}.$
		\end{itemize}
	\end{thm}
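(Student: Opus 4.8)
The plan is to exploit the fact that $V_{\mathcal{L}}(\ell_{123},0)$ is generated by $\omega=L_{-2}{\bf 1}$ and $I:=I_{-1}{\bf 1}$, together with the requirement that every automorphism fixes $\omega$ and preserves each graded piece $V_{\mathcal{L}}(\ell_{123},0)_{(n)}$. From the basis description of the graded subspaces, the weight-one space $V_{\mathcal{L}}(\ell_{123},0)_{(1)}$ is spanned by $I$ alone, so any automorphism $\sigma$ must act by $\sigma(I)=\lambda I$ for some $\lambda\in\C^{\times}$ (nonzero since $\sigma$ is invertible). As $\sigma$ is then determined on both generators, it is determined on all of $V_{\mathcal{L}}(\ell_{123},0)$, and the whole problem reduces to deciding exactly which scalars $\lambda$ can occur.

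For the necessary conditions on $\lambda$ I would extract two relations from the axiom $\sigma(u_{n}v)=\sigma(u)_{n}\sigma(v)$. First, since $\omega_{2}=L(1)$ and $L(1)I_{-1}{\bf 1}=[L_{1},I_{-1}]{\bf 1}=(I_{0}-2c_{2}){\bf 1}=-2\ell_{2}{\bf 1}$, applying $\sigma$ to $\omega_{2}I=-2\ell_{2}{\bf 1}$ and using $\sigma(\omega)=\omega$, $\sigma({\bf 1})={\bf 1}$ yields $-2\lambda\ell_{2}{\bf 1}=-2\ell_{2}{\bf 1}$, i.e.\ $\ell_{2}(\lambda-1)=0$. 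Second, since $(I)_{1}I=I_{1}I_{-1}{\bf 1}=[I_{1},I_{-1}]{\bf 1}=\ell_{3}{\bf 1}$, applying $\sigma$ gives $\lambda^{2}\ell_{3}{\bf 1}=\ell_{3}{\bf 1}$, i.e.\ $\ell_{3}(\lambda^{2}-1)=0$. These two constraints already force $\lambda=1$ when $\ell_{2}\neq 0$, force $\lambda\in\{\pm 1\}$ when $\ell_{2}=0\neq\ell_{3}$, and impose no restriction when $\ell_{2}=\ell_{3}=0$.

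For sufficiency I would build the automorphisms explicitly from the Lie-algebra level. For each $\lambda\in\C^{\times}$ define $\phi_{\lambda}$ on $\mathcal{L}$ by $\phi_{\lambda}(L_{n})=L_{n}$, $\phi_{\lambda}(I_{n})=\lambda I_{n}$, $\phi_{\lambda}(c_{1})=c_{1}$, $\phi_{\lambda}(c_{2})=\lambda c_{2}$, $\phi_{\lambda}(c_{3})=\lambda^{2}c_{3}$; a direct check of the three bracket relations shows $\phi_{\lambda}$ is a Lie-algebra automorphism preserving both the decomposition $\mathcal{L}=\mathcal{L}_{(\leq 1)}\oplus\mathcal{L}_{(\geq 2)}$ and the $\Z$-grading. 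This $\phi_{\lambda}$ descends to a linear isomorphism $\sigma_{\lambda}$ of the induced module $V_{\mathcal{L}}(\ell_{123},0)$ fixing ${\bf 1}$ exactly when it preserves the central character $(c_{1},c_{2},c_{3})\mapsto(\ell_{1},\ell_{2},\ell_{3})$, that is, exactly when $\lambda\ell_{2}=\ell_{2}$ and $\lambda^{2}\ell_{3}=\ell_{3}$ --- precisely the conditions found above. Since $\sigma_{\lambda}$ sends the generating fields by $L(z)\mapsto L(z)$ and $I(z)\mapsto\lambda I(z)=Y(\sigma_{\lambda}I,z)$, the intertwining relation $\sigma_{\lambda}Y(v,z)=Y(\sigma_{\lambda}v,z)\sigma_{\lambda}$ holds on the generating set, hence on all of $V_{\mathcal{L}}(\ell_{123},0)$ by the standard principle that it suffices to check such a relation on generators (cf.\ the untwisted analogue of Proposition \ref{hom} and Proposition 4.5.1 of \cite{LL}); thus $\sigma_{\lambda}\in\mbox{Aut}(V_{\mathcal{L}}(\ell_{123},0))$.

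Finally I would assemble the group. The assignment $\lambda\mapsto\sigma_{\lambda}$ is multiplicative, since $\sigma_{\lambda}\sigma_{\mu}(I)=\lambda\mu I$ while both fix $\omega$, and it is injective because $\sigma_{\lambda}=\mathrm{id}$ forces $\lambda=1$; by the preceding paragraphs its image is all of $\mbox{Aut}(V_{\mathcal{L}}(\ell_{123},0))$. The admissible set of $\lambda$ is $\{1\}$, $\{\pm 1\}\cong\Z_{2}$, or $\C^{\times}$ in the three respective cases, which gives the stated isomorphisms. I expect the main obstacle to be the sufficiency step --- verifying cleanly that the Lie-algebra automorphism $\phi_{\lambda}$ lifts to a genuine vertex operator algebra automorphism rather than merely a graded linear map --- since this is where the rescaling of the Heisenberg field must be matched against the central-character condition and the generation principle invoked; the necessity computations, by contrast, are short and mechanical.
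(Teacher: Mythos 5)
Your proof is correct and its core follows exactly the paper's own argument: generation by $\omega$ and $I$, grading-preservation forcing $\varphi(I)=aI$, and the two computations $L_{1}I=-2\ell_{2}{\bf 1}$ and $I_{1}I=\ell_{3}{\bf 1}$ yielding the constraints $\ell_{2}(a-1)=0$ and $\ell_{3}(a^{2}-1)=0$. Where you go beyond the paper is the sufficiency step: the paper simply asserts that every admissible scalar $a$ occurs, whereas you exhibit the automorphisms explicitly by lifting the Lie-algebra automorphism $\phi_{\lambda}$ ($L_{n}\mapsto L_{n}$, $I_{n}\mapsto\lambda I_{n}$, $c_{2}\mapsto\lambda c_{2}$, $c_{3}\mapsto\lambda^{2}c_{3}$) to the induced module --- noting correctly that the lift exists precisely when the central character $(\ell_{1},\ell_{2},\ell_{3})$ is $\phi_{\lambda}$-invariant, which reproduces the same constraints --- and then invoke the generation principle to upgrade the intertwining relation on $\{\omega,I\}$ to a vertex operator algebra automorphism. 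This makes your write-up strictly more complete than the paper's proof (which is pure necessity plus an implicit existence claim), at the cost of the extra verification that $\phi_{\lambda}$ descends and that checking on generators suffices; both checks are standard and you have set them up correctly.
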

	\begin{proof}
		Let $$\varphi: V_{\mathcal{L}}(\ell_{123},0)\longrightarrow V_{\mathcal{L}}(\ell_{123},0)$$
		be an automorphism of the vertex operator algebra $V_{\mathcal{L}}(\ell_{123},0)$.
		Then $\varphi({\bf 1})={\bf 1}$ and $\varphi(\omega)=\omega.$
		Since $V_{\mathcal{L}}(\ell_{123},0)$ is generated by $\omega$ and $I=I_{-1}{\bf 1}$, it suffices to determine
		$\varphi(I)$.
		$\varphi$ is grading-preserving, so 
		$\varphi(I)=aI$ for some $a\in\mathbb{C}^{\times}$.
		
		Then, on the one hand, we have
		$$\varphi(L_{1}I)=aL_{1}I=a[L_{1},I_{-1}]{\bf 1}=-2a\ell_{2}{\bf 1},$$
		on the other hand, we have
		$$\varphi(L_{1}I)=\varphi([L_{1},I_{-1}]{\bf 1})=-2\ell_{2}{\bf 1}.$$
		Therefore if $\ell_{2}\neq 0$, we get that $a=1$,
		i.e. when $\ell_{2}\neq 0$, $\mbox{Aut}(V_{\mathcal{L}}(\ell_{123},0))=\{id\}$ only consists of the identity map.
		
		Suppose now $\ell_{2}= 0$. Let's consider $\varphi(I_{1}I)$.
		On the one hand,
		$$\varphi(I_{1}I)=\varphi(I)_{1}\varphi(I)=a^{2}I_{1}I_{-1}{\bf 1}=a^{2}[I_{1}, I_{-1}]{\bf 1}=a^{2}\ell_{3}{\bf 1},$$
		on the other hand,
		$$\varphi(I_{1}I)=\varphi([I_{1}, I_{-1}]{\bf 1})=\ell_{3}{\bf 1}.$$
		So if $\ell_{3}\neq 0$, we get $a^{2}=1$, i.e. $a=1$
		or $a=-1$, then $\mbox{Aut}(V_{\mathcal{L}}(\ell_{123},0))\cong\mathbb{Z}_{2}$.

		Now	let $\ell_{2}=0 \;\mbox{and}\; \ell_{3}=0$, then $a$ can be any nonzero complex number, so $\mbox{Aut}(V_{\mathcal{L}}(\ell_{123},0))\cong\mathbb{C}^{\times}.$
		
	\end{proof}

	%\section{Representations of the fixed point subalgebra}
	\section{$\sigma_{t}$-twisted $V_{\mathcal{L}}(\ell_{123},0)$-modules}
	\def\theequation{4.\arabic{equation}}
	\setcounter{equation}{0}
	
	In this section, we always require $\ell_{2}=0$.
	We study twisted modules for the vertex operator algebra $V_{\mathcal{L}}(\ell_{123},0)$.
	More precisely, for any integer $t>1$, we introduce an infinite-dimensional Lie algebra $\mathcal{L}_{t}$. We show that there is a one-to-one correspondence between restricted $\mathcal{L}_{t}$-modules of level $\ell_{13}$
	and $\sigma_{t}$-twisted $V_{\mathcal{L}}(\ell_{123},0)$-modules, where
	$\sigma_{t}$ is an order $t$ automorphism of $V_{\mathcal{L}}(\ell_{123},0)$. And we give a complete list of irreducible $\sigma_{t}$-twisted $V_{\mathcal{L}}(\ell_{123},0)$-modules.
	
	Note that if $\ell_{2}=0$ and $\ell_{3}\neq0$, then $t$ can only be the integer 2 (Theorem \ref{autogp}).
	As we need, we introduce the following Lie algebra.
	\begin{definition}
		{\em	Let $\mathcal{L}_{t}$ be a Lie algebra with basis
			$\{L_{n},I_{n+\frac{1}{t}},k_{1},k_{3}\ | \ n\in\Z  \}$, and the Lie brackets are given by:
			\begin{eqnarray}
			[L_{m}, L_{n}]=(m-n)L_{m+n}+\delta_{m+n,0}\frac{m^{3}-m}{12}k_{1},  \label{Lt1}
			\end{eqnarray}
			\begin{eqnarray}
			[L_{m}, I_{n+\frac{1}{t}}]=-(n+\frac{1}{t}) I_{m+n+\frac{1}{t}},
			\label{Lt2}   
			\end{eqnarray}
			\begin{eqnarray}
			[I_{m+\frac{1}{t}}, I_{n+\frac{1}{t}}]=(m+\frac{1}{t})\delta_{m+n+\frac{2}{t},0}\delta_{t,2}k_{3},\;\;\; [\mathcal{L}, k_{i}]=0,\; i=1,3.  \label{Lt3}
			\end{eqnarray}
		}
	\end{definition}
	
	Note that if $t\neq 2$, then $[I_{m+\frac{1}{t}}, I_{n+\frac{1}{t}}]=0$ for any $m,n\in\Z$.  
	
	Form the generating function as
	$$L(z)=\sum\limits_{n\in\Z}L_{n}z^{-n-2},\;\;I_{\sigma_{t}}(z)=\sum\limits_{n\in\Z}I_{n+\frac{1}{t}}z^{-n-\frac{1}{t}-1}.$$ 
	Then the defining relations (\ref{Lt1}), (\ref{Lt2}), (\ref{Lt3}) amount to:
	\begin{eqnarray}
	&&{}[L(z_{1}), L(z_{2})]  \nonumber \\
	%&&{}
	%=\sum\limits_{m,n\in\Z}(m-n)L_{m+n}z_{1}^{-m-2}z_{2}^{-n-2}+\sum\limits_{m\in\Z}\frac{m^{3}-m}{12}z_{1}^{-m-2}z_{2}^{m-2}k_{1} \nonumber \\
	&&{}=\frac{d}{dz_{2}}(L(z_{2}))z_{1}^{-1}\delta\left(\frac{z_{2}}{z_{1}}\right)
	+ 2L(z_{2})\frac{\partial}{\partial z_{2}}z_{1}^{-1}\delta\left(\frac{z_{2}}{z_{1}}\right)
	\nonumber \\
	&&{}
	\;\;\;\;+\frac{1}{12}\left(\frac{\partial}{\partial z_{2}}\right)^{3}z_{1}^{-1}\delta\left(\frac{z_{2}}{z_{1}}\right)k_{1},   \label{eq:4.4}
	\end{eqnarray}
	\begin{eqnarray}
	&&{}[L(z_{1}), I_{\sigma_{t}}(z_{2})]  \nonumber \\
	&&{}=-\sum\limits_{m,n\in\Z}(n+\frac{1}{t}) I_{m+n+\frac{1}{t}}z_{1}^{-m-2}z_{2}^{-n-\frac{1}{t}-1}\nonumber \\
	&&{}=\frac{d}{dz_{2}}\Big(I_{\sigma_{t}}(z_{2})\Big)z_{1}^{-1}\delta\left(\frac{z_{2}}{z_{1}}\right)
	+ I_{\sigma_{t}}(z_{2})\frac{\partial}{\partial z_{2}}z_{1}^{-1}\delta\left(\frac{z_{2}}{z_{1}}\right),  \label{eq:4.5}
	\end{eqnarray}
	\begin{eqnarray}
	&&{}[I_{\sigma_{t}}(z_{1}), I_{\sigma_{t}}(z_{2})]
	=\sum\limits_{m\in\Z}(m+\frac{1}{t}) z_{1}^{-m-\frac{1}{t}-1}z_{2}^{m+\frac{1}{t}-1}\delta_{t,2}k_{3}
	\nonumber\\
	&&{}
	=\frac{\partial}{\partial z_{2}}\left(z_{1}^{-1}\delta\left(\frac{z_{2}}{z_{1}}\right)\left(\frac{z_{2}}{z_{1}}\right)^{\frac{1}{t}}\right)\delta_{t,2}k_{3}. \label{eq:4.6} 
	\end{eqnarray}
	
	Now we construct irreducible $\mathcal{L}_{t}$-modules (cf. \cite{GW}, \cite{LL}, etc.).
	Let
	$$(\mathcal{L}_{t})_{\geq 0}=(\coprod_{m\geq 0}\C L_{m})\oplus(\coprod_{n\geq 0}\C I_{n+\frac{1}{t}})\oplus \C k_{1}\oplus \C k_{3}.$$
	It is a subalgebra of $\mathcal{L}_{t}.$

	Let $\C$ be an $(\mathcal{L}_{t})_{\geq 0}$-module, where $L_{m}$, $I_{n+\frac{1}{t}}$ act trivially for all $m\geq 1$, $n\geq 0$,
	and $L_{0}, k_{1}, k_{3}$ act as scalar multiplications by $h, \Bbbk_{1}, \Bbbk_{3}$ respectively. Denote this $(\mathcal{L}_{t})_{\geq 0}$-module by $\mathbb{C}_{\Bbbk_{13},h}$.
	Form the induced module
	$$M_{\mathcal{L}_{t}}(\Bbbk_{1}, \Bbbk_{3}, h)=U(\mathcal{L}_{t})\otimes_{U((\mathcal{L}_{t})_{\geq 0})}\C_{\Bbbk_{13},h}.$$
	Set 
	$${\bf 1}_{\Bbbk_{13},h}=1\in\mathbb{C}_{\Bbbk_{13},h}\subset M_{\mathcal{L}_{t}}(\Bbbk_{1}, \Bbbk_{3}, h).$$
	Then $M_{\mathcal{L}_{t}}(\Bbbk_{1}, \Bbbk_{3}, h)$ is $\C$-graded by $L_{0}$-eigenvalues:
	$$M_{\mathcal{L}_{t}}(\Bbbk_{1}, \Bbbk_{3}, h)=\coprod_{n\geq 0}M_{\mathcal{L}_{t}}(\Bbbk_{1}, \Bbbk_{3}, h)_{n+h},$$
	where $M_{\mathcal{L}_{t}}(\Bbbk_{1}, \Bbbk_{3}, h)_{(h)}=\mathbb{C}_{\Bbbk_{13},h}$ and $M_{\mathcal{L}_{t}}(\Bbbk_{1}, \Bbbk_{3}, h)_{(n+h)}$ is the $L_{0}$-eigenspace of eigenvalue $n+h$ for $n> 0$.
	$M_{\mathcal{L}[-1]}(\Bbbk_{1}, \Bbbk_{3}, h)_{(n+h)}$ has a basis consisting of
	$$I_{-k_{1}+\frac{1}{t}}\cdots I_{-k_{s}+\frac{1}{t}}L_{-m_{1}}\cdots L_{-m_{r}}{\bf{1}}_{\Bbbk_{13},h} $$
	for
	$r, s\geq 0$,
	$m_{1}\geq\cdots\geq m_{r}\geq 1$, $k_{1}\geq\cdots\geq k_{s}\geq 1$ with $\sum\limits_{i=1}^{r}m_{i}+\sum\limits_{j=1}^{s}(k_{j}-\displaystyle\frac{1}{t})=n$, $n> 0$.
	
	\begin{rmk}
		\label{universal}
		{\em
			As a module for $\mathcal{L}_{t}$, $M_{\mathcal{L}_{t}}(\Bbbk_{1}, \Bbbk_{3}, h)$ is generated by ${\bf 1}_{\Bbbk_{13},h}$ with the relations
			\begin{equation*}
			L_{0}{\bf 1}_{\Bbbk_{13},h}=h{\bf 1}_{\Bbbk_{13},h},\;k_{i}=\Bbbk_{i},\;i=1,3,
			\end{equation*}
			and
			\begin{equation*}
			L_{m}{\bf 1}_{\Bbbk_{13},h}=0, I_{n+\frac{1}{t}}{\bf 1}_{\Bbbk_{13},h}=0\; \mbox{for}\;\; m\geq 1, n\geq 0.
			\end{equation*}
			$M_{\mathcal{L}	_{t}}(\Bbbk_{1}, \Bbbk_{3}, h)$ is {\em universal}
			in the sense that for any $\mathcal{L}_{t}$-module $W$ of level $\Bbbk_{13}$ equipped with a vector $v$ such that 
			$L_{0}v=hv,\;
			L_{m}v=0, I_{n+\frac{1}{t}}v=0\; \mbox{for}\; m\geq 1, n\geq 0$,
			there exists a unique $\mathcal{L}_{t}$-module map $M_{\mathcal{L}_{t}}(\Bbbk_{1}, \Bbbk_{3}, h)\longrightarrow W$ sending ${\bf 1}_{\Bbbk_{13},h}$ to $v$.
		}
	\end{rmk}
	
	In general, $M_{\mathcal{L}_{t}}(\Bbbk_{1}, \Bbbk_{3}, h)$ as an $\mathcal{L}_{t}$-module may be reducible.
	Since $\mathbb{C}_{\Bbbk_{13},h}$ generate $M_{\mathcal{L}_{t}}(\Bbbk_{1}, \Bbbk_{3}, h)$ as 
	$\mathcal{L}_{t}$-module, for any proper submodule $U$ of $M_{\mathcal{L}_{t}}(\Bbbk_{1}, \Bbbk_{3}, h)$, 
	$U_{(h)}=U\bigcap M_{\mathcal{L}_{t}}(\Bbbk_{1}, \Bbbk_{3}, h)_{(h)}=0$.
	Hence there exists a maximal proper $\mathcal{L}_{t}$-submodule $T_{\mathcal{L}_{t}}(\Bbbk_{1}, \Bbbk_{3}, h)$.
	Set
	$$L_{\mathcal{L}_{t}}(\Bbbk_{1}, \Bbbk_{3}, h)=M_{\mathcal{L}_{t}}(\Bbbk_{1}, \Bbbk_{3}, h)/T_{\mathcal{L}_{t}}(\Bbbk_{1}, \Bbbk_{3}, h).$$
	Then $L_{\mathcal{L}_{t}}(\Bbbk_{1}, \Bbbk_{3}, h)$ is an irreducible $\mathcal{L}_{t}$-module.
	
	\begin{definition}
		{\em An $\mathcal{L}_{t}$-module $W$ is said to be {\em restricted} if for any $w\in W, n\in\Z$,
			$L_{n}w=0$ and $I_{n+\frac{1}{t}}w=0$ for $n$ sufficiently large.
			We say an $\mathcal{L}_{t}$-module $W$ is of {\em level} $\Bbbk_{13}$ if the central element $k_{i}$ acts as
			scalar $\Bbbk_{i}$ for $i=1,3.$}
	\end{definition}
	
	It is easy to see that $M_{\mathcal{L}_{t}}(\Bbbk_{1}, \Bbbk_{3}, h)$, $L_{\mathcal{L}_{t}}(\Bbbk_{1}, \Bbbk_{3}, h)$ are restricted $\mathcal{L}_{t}$-modules of level $\Bbbk_{13}$, for any $h\in\C$. 
	Now we are going to relate $\mathcal{L}_{t}$-modules with twisted $V_{\mathcal{L}}(\ell_{123},0)$-modules.
	On the one hand, we have 
	\begin{thm}
		\label{1}
		If $W$ is a restricted $\mathcal{L}_{t}$-module of level $\ell_{13}$, then $W$ is a $\sigma_{t}$-twisted
		$V_{\mathcal{L}}(\ell_{123},0)$-module for $V_{\mathcal{L}}(\ell_{123},0)$ as a vertex algebra with
		$$Y_{\sigma_{t}}(L_{-2}{\bf 1}, z)=L(z)=\sum\limits_{n\in\Z}L_{n}z^{-n-2},$$
		$$Y_{\sigma_{t}}(I_{-1}{\bf 1}, z)=I_{\sigma_{t}}(z)=\sum\limits_{n\in\Z}I_{n+\frac{1}{t}}z^{-n-\frac{1}{t}-1}.$$
	\end{thm}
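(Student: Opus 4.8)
The plan is to realize $W$ as a twisted module by assembling a local system of twisted vertex operators on $W$ and invoking Proposition~\ref{gen}. Put $a(z)=L(z)=\sum_{n\in\Z}L_{n}z^{-n-2}$ and $b(z)=I_{\sigma_{t}}(z)=\sum_{n\in\Z}I_{n+\frac{1}{t}}z^{-n-\frac{1}{t}-1}$, viewed as formal series of operators on $W$. Since $W$ is restricted, for each $w\in W$ only finitely many of $L_{n}w$ and $I_{n+\frac{1}{t}}w$ with $n$ large are nonzero, so $a(z),b(z)\in F(W,t)$ are genuine $\Z_{t}$-twisted weak vertex operators. Set $S=\{a(z),b(z)\}$. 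The whole argument then reduces to checking that $S$ is a mutually local set, identifying the resulting vertex algebra $\langle S\rangle$ as a homomorphic image of $V_{\mathcal{L}}(\ell_{123},0)$, and pulling the twisted module structure back along that homomorphism.

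First I would verify mutual locality directly from the delta-function forms of the brackets. The commutator $[L(z_{1}),L(z_{2})]$ in (\ref{eq:4.4}) is a combination of terms $\big(\frac{\partial}{\partial z_{2}}\big)^{j}z_{1}^{-1}\delta(z_{2}/z_{1})$ with $j\le 3$, so by (\ref{eq:2.1}) we get $(z_{1}-z_{2})^{4}[L(z_{1}),L(z_{2})]=0$, i.e. $L(z)$ is local with itself. Likewise (\ref{eq:4.5}) involves only $j\le 1$, whence $(z_{1}-z_{2})^{2}[L(z_{1}),I_{\sigma_{t}}(z_{2})]=0$ and $L(z),I_{\sigma_{t}}(z)$ are mutually local. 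For the self-locality of $I_{\sigma_{t}}(z)$, the bracket (\ref{eq:4.6}) vanishes identically when $t\neq2$, and when $t=2$ it is a single $z_{2}$-derivative of the twisted delta $D=z_{1}^{-1}\delta(z_{2}/z_{1})(z_{2}/z_{1})^{1/t}$; since the scalar factor $z_{2}^{1/t}z_{1}^{-1/t}$ commutes past $(z_{1}-z_{2})$, the ordinary identity $(z_{1}-z_{2})z_{1}^{-1}\delta(z_{2}/z_{1})=0$ gives $(z_{1}-z_{2})D=0$, and then $(z_{1}-z_{2})\frac{\partial}{\partial z_{2}}D=D$ by the product rule, so $(z_{1}-z_{2})^{2}[I_{\sigma_{t}}(z_{1}),I_{\sigma_{t}}(z_{2})]=0$.

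With $S$ mutually local, Proposition~\ref{gen} yields a vertex algebra $\langle S\rangle$ carrying an order-$t$ automorphism $\sigma$, for which $W$ is a $\sigma$-twisted $\langle S\rangle$-module under $Y_{W}(c(z),z_{1})=c(z_{1})$ on generators. Applying $\sigma f(z^{1/t})=f(\eta^{-1}z^{1/t})$ to the generators shows $L(z)\in F(W,t)^{0}$ is fixed while $I_{\sigma_{t}}(z)\in F(W,t)^{1}$ is scaled by $\eta$. I would then compute the lowest products $a(z)_{n}b(z)$ inside $\langle S\rangle$ and check that they reproduce the defining relations of $\mathcal{L}$ with $c_{1}\mapsto\ell_{1}$, $c_{3}\mapsto\ell_{3}$ and $c_{2}\mapsto0$: that $L(z)$ is a Virasoro element of central charge $\ell_{1}$, that $I_{\sigma_{t}}(z)$ is primary of weight $1$ under it, and that $I_{\sigma_{t}}(z)_{1}I_{\sigma_{t}}(z)=\ell_{3}{\bf 1}$. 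Here the standing hypothesis $\ell_{2}=0$ is essential, as it kills the anomaly $-\delta_{m+n,0}(m^{2}+m)c_{2}$ in (\ref{eq:2.5}) that would otherwise destroy primality and the matching of relations. By the universal property of the induced construction of $V_{\mathcal{L}}(\ell_{123},0)$ (as for the affine case in \cite{LL}, and following \cite{GW}), there is a vertex algebra homomorphism $\psi:V_{\mathcal{L}}(\ell_{123},0)\to\langle S\rangle$ with $\psi(\omega)=L(z)$ and $\psi(I_{-1}{\bf 1})=I_{\sigma_{t}}(z)$. Pulling back the $\sigma$-twisted $\langle S\rangle$-module $W$ along $\psi$ exhibits $W$ as a $\sigma_{t}$-twisted $V_{\mathcal{L}}(\ell_{123},0)$-module, where $\sigma_{t}$ is the automorphism fixing $\omega$ and sending $I\mapsto\eta I$; by Theorem~\ref{autogp} this is a bona fide order-$t$ automorphism precisely because $\ell_{2}=0$. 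The displayed formulas for $Y_{\sigma_{t}}$ are then immediate from $Y_{W}(c(z),z_{1})=c(z_{1})$.

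The main obstacle I expect is twofold. The first is the self-locality of $I_{\sigma_{t}}(z)$ in the genuinely twisted case $t=2$: the fractional exponents force one to use the twisted analogue of (\ref{eq:2.1}) rather than its stated integer form, which is exactly the delicate computation sketched above. The second is bookkeeping on the automorphism: one must confirm that the abstract order-$t$ automorphism $\sigma$ of $\langle S\rangle$ produced by Proposition~\ref{gen} transports under $\psi$ to precisely the automorphism $\sigma_{t}$ of $V_{\mathcal{L}}(\ell_{123},0)$ coming from Theorem~\ref{autogp}, i.e. that the twist degrees $0$ and $1$ of the two generators correspond to the $\sigma_{t}$-eigenvalues $1$ and $\eta$ on $\omega$ and $I$.
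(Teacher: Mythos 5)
Your proposal is correct and takes essentially the same route as the paper's proof: both form the local system generated by $L(z)$ and $I_{\sigma_{t}}(z)$, verify mutual locality from (\ref{eq:4.4})--(\ref{eq:4.6}) via (\ref{eq:2.1}), invoke Proposition~\ref{gen} to realize $W$ as a twisted module for $\langle S\rangle$, establish the untwisted $\mathcal{L}$-relations among the generating vertex operators inside $\langle S\rangle$ (which the paper gets by citing Lemma 2.11 of \cite{L1}, exactly the step you describe as computing the products $a(z)_{n}b(z)$), and then use the universal property of $V_{\mathcal{L}}(\ell_{123},0)$ to produce the vertex algebra homomorphism $\psi$ along which the twisted module structure pulls back (the paper cites Proposition 3.17 of \cite{L1} for this last transfer). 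The only notable difference is cosmetic: you spell out the self-locality of $I_{\sigma_{t}}(z)$ in the genuinely twisted case $t=2$, handling the fractional-power factor explicitly, where the paper simply cites (\ref{eq:2.1}).
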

	\begin{proof}
		Let $U_{W}=\{L(z), I_{\sigma_{t}}(z), {\bf 1}_{W}\}$, where ${\bf 1}_{W}$ is the identity operator on $W$. 
		Clearly, $L(z), I_{\sigma_{t}}(z)$ are $\Z_{t}$-twisted weak vertex operators on $W$.
		From (\ref{eq:4.4}), (\ref{eq:4.5}), (\ref{eq:4.6}), using (\ref{eq:2.1}), we see that
		%$$(z_{1}-z_{2})^{4}[L(z_{1}), L(z_{2})]=0,$$
		%$$(z_{1}-z_{2})^{2}[L(z_{1}), I_{\sigma_{t}}(z_{2})]=0,$$
		%$$(z_{1}-z_{2})^{2}[I_{\sigma_{t}}(z_{1}), I_{\sigma_{t}}(z_{2})]=0,$$
		%i.e. 
		$L(z), I_{\sigma_{t}}(z)$ are mutually local $\Z_{t}$-twisted vertex operators on $W$.
		Hence, by Proposition \ref{gen}, $\langle U_{W}\rangle$ is a vertex algebra with $W$ a faithful $\sigma$-twisted module, where $\sigma$ is an order $t$ automorphism of the vertex algebra $\langle U_{W}\rangle$.
		To say that $W$ is a $\sigma_{t}$-twisted module for $V_{\mathcal{L}}(\ell_{123},0)$, from Proposition 3.17 of \cite{L1},
		it suffices to show that there exists a vertex algebra homomorphism from $V_{\mathcal{L}}(\ell_{123},0)$
		to $\langle U_{W}\rangle$.

		By Lemma 2.11 of \cite{L1},
		$Y(L(z),z_{1})$ and $Y(I_{\sigma_{t}}(z),z_{1})$ satisfy the twisted Heisenberg-Virasoro relations (\ref{eq:2.7}), (\ref{eq:2.8}), (\ref{eq:2.9}).
		Then $\langle U_{W}\rangle$ is an $\mathcal{L}$-module with $L_{n}, I_{n}$ acting as $L(z)_{n+1}$, $I_{\sigma_{t}}(z)_{n}$ for 
		$n\in\Z,$ $c_{i}$ acting as $\ell_{i}$ with $\ell_{2}=0$, $i=1,2,3.$

		By the universal property of $V_{\mathcal{L}}(\ell_{123},0)$ (c.f. Remark 2.7 of \cite{GW}), there exists a unique $\mathcal{L}$-module
		homomorphism 
		$$\psi: V_{\mathcal{L}}(\ell_{123},0)\longrightarrow \langle U_{W}\rangle;\;\;{\bf 1}\mapsto {\bf 1}_{W}.$$
		Then
		$$\psi(\omega_{n}v)=L(z)_{n}\psi(v)=\psi(\omega)_{n}\psi(v),$$
		$$\psi(I_{n}v)=I_{\sigma_{t}}(z)_{n}\psi(v)=\psi(I)_{n}\psi(v).$$
		for all $v\in V_{\mathcal{L}}(\ell_{123},0)$, $n\in\Z.$
		Hence $\psi$ is a vertex algebra homomorphism.
		Therefore, $W$ is a weak $\sigma_{t}$-twisted
		$V_{\mathcal{L}}(\ell_{123},0)$-module with
		$Y_{\sigma_{t}}(L_{-2}{\bf 1}, z)=L(z)$,
		$Y_{\sigma_{t}}(I_{-1}{\bf 1}, z)=I_{\sigma_{t}}(z).$
	\end{proof}

	Conversely, we have
	\begin{thm}
		\label{2}
		If $W$ is a $\sigma_{t}$-twisted $V_{\mathcal{L}}(\ell_{123},0)$ ($\ell_{2}=0$)-module,
		then $W$ is a restricted $\mathcal{L}_{t}$-module of level $\ell_{13}$ 
		with $L(z)=Y_{W}(L_{-2}{\bf 1},z)$, $I_{\sigma_{t}}(z)=Y_{W}(I_{-1}{\bf 1},z).$
	\end{thm}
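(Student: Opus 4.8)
The plan is to let $L_m:=\omega_{m+1}$ and $I_{n+\frac1t}$ denote the modes of the twisted fields $Y_W(\omega,z)=L(z)$ and $Y_W(I,z)=I_{\sigma_t}(z)$, and to check that these operators satisfy the defining relations (\ref{Lt1})--(\ref{Lt3}) of $\mathcal{L}_t$ with $k_1,k_3$ acting as $\ell_1,\ell_3$. The restriction condition for $\mathcal{L}_t$ is exactly the truncation axiom (TW1), and the fact that $I_{\sigma_t}(z)$ carries modes indexed by $\frac1t+\Z$ reflects that, by Theorem \ref{autogp}, $\sigma_t$ scales $I$ by a primitive $t$-th root of unity, placing $I$ in the $\sigma_t$-eigenspace whose twisted operators are $\frac1t$-shifted. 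So the substance of the proof is the extraction of the three brackets.

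The main tool is the commutator formula obtained by applying $\mathrm{Res}_{z_0}$ to the $\sigma_t$-twisted Jacobi identity (TW4). Before using it I would record, inside $V_{\mathcal{L}}(\ell_{123},0)$, the finitely many products $u_jv$ with $j\ge 0$ among the generators:
$$\omega_0\omega=L(-1)\omega,\quad \omega_1\omega=2\omega,\quad \omega_3\omega=\tfrac{\ell_1}{2}\mathbf 1,\qquad \omega_0I=L(-1)I,\quad \omega_1I=I,\qquad I_0I=0,\quad I_1I=\ell_3\mathbf 1,$$
every other such product vanishing. Here $\ell_2=0$ is used to kill the central term in $\omega_jI$, and the derivative axiom (TW3) converts the $L(-1)$-terms into index shifts via $(L(-1)v)_n=-n\,v_{n-1}$.

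For the brackets that involve $\omega$ the computation is effectively untwisted: since $\omega\in V^0$ satisfies $\sigma_t^k\omega=\omega$, the $k$-sum of ($\eta^k$-weighted) delta functions in (TW4) collapses to the ordinary delta $z_2^{-1}\delta(\frac{z_1-z_0}{z_2})$, and the commutator formula reduces to the Borcherds identity $[\omega_{m+1},v_b]=\sum_{j\ge0}\binom{m+1}{j}(\omega_jv)_{m+1+b-j}$. Substituting the products above, the $\omega$-$\omega$ case yields the Virasoro relation (\ref{Lt1}) with central charge $\ell_1$, and the $\omega$-$I$ case collapses, after the cancellation between the $j=0$ and $j=1$ terms, to $[L_m,I_{n+\frac1t}]=-(n+\tfrac1t)I_{m+n+\frac1t}$, i.e. (\ref{Lt2}).

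The one genuinely twisted computation is $[I_{m+\frac1t},I_{n+\frac1t}]$, and I expect this to be the main obstacle. Now $I$ lies in a nontrivial $\sigma_t$-eigenspace, so the $k$-sum in (TW4) is weighted by the eigenvalues of $\sigma_t^k$ and projects onto a nonzero congruence class; the surviving delta carries a fractional factor $(\frac{z_1-z_0}{z_2})^{1/t}$, which is precisely what produces the generalized binomial $\binom{m+\frac1t}{j}$ and the $\frac1t$-shift in the index. Because $I_0I=0$ and $I_jI=0$ for $j\ge2$, only the $j=1$ term survives, giving
$$[I_{m+\frac1t},I_{n+\frac1t}]=\big(m+\tfrac1t\big)(\ell_3\mathbf 1)_{m+n+\frac2t-1}=\big(m+\tfrac1t\big)\ell_3\,\delta_{m+n+\frac2t,0}.$$
The careful bookkeeping of these fractional delta functions and binomials is the delicate step, but the outcome is exactly (\ref{Lt3}): since $m+n\in\Z$, the Kronecker delta $\delta_{m+n+\frac2t,0}$ is nonzero only when $\frac2t\in\Z$, that is, only for $t=2$, which is the structural reason for the factor $\delta_{t,2}$. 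Reading off $k_1=\ell_1$ and $k_3=\ell_3$ then shows that $W$ is a restricted $\mathcal{L}_t$-module of level $\ell_{13}$, as claimed.
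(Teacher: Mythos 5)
Your proposal is correct and follows essentially the same route as the paper: both apply the twisted commutator formula (the $\mathrm{Res}_{z_0}$ of the twisted Jacobi identity, quoted in the paper as (2.40) of \cite{L1}) to the generators $\omega$ and $I$, with $k=0$ for $\omega$ and $k=1$ for $I$, feed in the same products $\omega_j\omega$, $\omega_jI$ (using $\ell_2=0$), $I_jI$, and read off the relations (\ref{Lt1})--(\ref{Lt3}) together with restrictedness from (TW1). The only cosmetic differences are that you extract mode brackets while the paper compares formal-distribution identities against (\ref{eq:4.4})--(\ref{eq:4.6}), and that you account for the factor $\delta_{t,2}$ via the unsatisfiable Kronecker delta $\delta_{m+n+\frac{2}{t},0}$, whereas the paper invokes Theorem \ref{autogp} to note that $\ell_3=0$ is forced when $t\neq 2$.
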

	\begin{proof}
		Let $W$ be a $\sigma_{t}$-twisted $V_{\mathcal{L}}(\ell_{123},0)(\ell_{2}=0)$-module.
		Recall the following formula (c.f. (2.40) of \cite{L1})
		$$[Y_{W}(a,z_{1}), Y_{W}(b,z_{2})]=\sum_{j=0}^{\infty}\frac{1}{j!}\left(\left(\frac{\partial}{\partial z_{2}}\right)^{j}z_{1}^{-1}
		\delta\left(\frac{z_{2}}{z_{1}}\right)\left(\frac{z_{2}}{z_{1}}\right)^{\frac{k}{t}}\right)
		Y_{W}(a_{j}b,z_{2}),$$
		where $k$ is determined by $a$.
		In our case, when $a=L_{-2}{\bf 1}$, $k=0$, when $a=I_{-1}{\bf 1}$, $k=1.$
		For $a=b=L_{-2}{\bf 1}$, we have ((2.21) of \cite{GW})
		$$ (L_{-2}{\bf 1})_{j}L_{-2}{\bf 1}= (j+1)L_{j-3}
		{\bf 1}+\delta_{j-3,0}\frac{(j-1)^{3}-(j-1)}{12}c_{1}{\bf 1},     $$
		so 
		\begin{eqnarray}
		&&{} [Y_{W}(L_{-2}{\bf 1}, z_{1}), Y_{W}(L_{-2}{\bf 1}, z_{2})]    \nonumber\\
		&&{} = Y_{W}(L_{-3}{\bf 1}, z_{2})z_{2}^{-1}\delta\left(\frac{z_{1}}{z_{2}}\right)  \nonumber\\
		&&{}\;\quad +2Y_{W}(L_{-2}{\bf 1}, z_{2})\left(\frac{\partial}{\partial z_{1}}\right)z_{2}^{-1}\delta\left(\frac{z_{1}}{z_{2}}\right)  \nonumber\\
		&&{}\;\quad +\frac{1}{12}\left(\frac{\partial}{\partial z_{1}}\right)^{3}z_{2}^{-1}\delta\left(\frac{z_{1}}{z_{2}}\right)\ell_{1}{\bf 1}.
		\end{eqnarray}
		For $a=L_{-2}{\bf 1}, b=I_{-1}{\bf 1}$, we have ((2.22) of \cite{GW} with $\ell_{2}=0$)
		$$ (L_{-2}{\bf 1})_{j}I_{-1}{\bf 1}= I_{j-2}{\bf 1},$$
		so
		\begin{eqnarray}
		&&{}[Y_{W}(L_{-2}{\bf 1}, z_{1}), Y_{W}(I_{-1}{\bf 1}, z_{2})]    \nonumber\\
		&&{}= Y_{W}(I_{-2}{\bf 1}, z_{2})z_{1}^{-1}\delta\left(\frac{z_{2}}{z_{1}}\right)  
		+Y_{W}(I_{-1}{\bf 1}, z_{2})\left(\frac{\partial}{\partial z_{2}}\right)z_{1}^{-1}\delta\left(\frac{z_{2}}{z_{1}}\right),
		\end{eqnarray} 
		
		For $a=b=I_{-1}{\bf 1}$, we have ((2.23) of \cite{GW})
		$$(I_{-1}{\bf 1})_{j}I_{-1}{\bf 1}= j\delta_{j-1,0}c_{3}{\bf 1},$$
		so 
		\begin{eqnarray}
		[Y_{W}(I_{-1}{\bf 1}, z_{1}), Y_{W}(I_{-1}{\bf 1}, z_{2})]  
		= \left(\frac{\partial}{\partial z_{2}}\right)z_{1}^{-1}\delta\left(\frac{z_{2}}{z_{1}}\right)\left(\frac{z_{2}}{z_{1}}\right)^{\frac{1}{t}}\ell_{3}{\bf 1}.
		\end{eqnarray} 
		
		Note that for $t\neq 2$, we have to require $\ell_{3}=0$ (Theorem \ref{autogp}).
		Therefore, with (\ref{eq:4.4}), (\ref{eq:4.5}) and (\ref{eq:4.6}), $W$ is a $\mathcal{L}_{t}$-module with $L(z)=Y_{W}(L_{-2}{\bf 1},z)$, $I_{\sigma_{t}}(z)=Y_{W}(I_{-1}{\bf 1},z)$, $k_{i}=\ell_{i}$, $i=1,3$.
		Then $W$ is restricted of level $\ell_{13}$ is clear.
	\end{proof}

	Let
	\begin{equation*}
	\mathcal{L}_{t}^{(0)}=\C L_{0}\oplus\C k_{1}\oplus\C k_{3}, \;\;\;\;
	\mathcal{L}_{t}^{(n)}=\C L_{-n}\;\;\mbox{for}\;0\neq n\in\Z,
	\end{equation*}
	\begin{equation*}
	\mathcal{L}_{t}^{(-\frac{1}{t}+n)}=\C I_{-n+\frac{1}{t}},\;\;\;\;
	\mathcal{L}_{t}^{(k)}=0\;\;\mbox{for}\; k\in\frac{1}{t}\Z, \;k\notin\Z, -\frac{1}{t}+\Z.
	\end{equation*}
	Then $\mathcal{L}=\coprod\limits_{n\in\Z}\mathcal{L}_{t}^{(\frac{n}{t})}$ is a $\displaystyle\frac{1}{t}\Z$-graded Lie algebra, and the grading is given by $\mbox{ad} L_{0}$-eigenvalues.

	By Theorem \ref{1} and Theorem \ref{2} we have the following result.
	\begin{thm}
		\label{voamod}
		The $\sigma_{t}$-twisted modules for $V_{\mathcal{L}}(\ell_{123},0)$ ($\ell_{2}=0$) viewed as a vertex operator algebra
		(i.e. $\C$-graded by $L_{0}$-eigenvalues and with the two grading restrictions (TW6), (TW7)) are exactly those restricted modules
		for the Lie algebra $\mathcal{L}_{t}$ of level $\ell_{13}$ that are $\C$-graded by $L_{0}$-eigenvalues and with the two grading restrictions.
		Furthermore, for any $\sigma_{t}$-twisted $V_{\mathcal{L}}(\ell_{123},0)$-module $W$, the $\sigma_{t}$-twisted $V_{\mathcal{L}}(\ell_{123},0)$-submodules of $W$ are exactly
		the submodules of $W$ for $\mathcal{L}_{t}$, and these submodules are in particular graded.
	\end{thm}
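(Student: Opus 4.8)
The plan is to lift the weak-module correspondence already established in Theorems \ref{1} and \ref{2} to the level of genuine ($\mathbb{C}$-graded) modules and of submodules. First I would observe that the two constructions are mutually inverse. Given a restricted $\mathcal{L}_{t}$-module $W$ of level $\ell_{13}$, Theorem \ref{1} equips it with a $\sigma_{t}$-twisted weak $V_{\mathcal{L}}(\ell_{123},0)$-module structure determined by $Y_{\sigma_{t}}(L_{-2}{\bf 1},z)=L(z)$ and $Y_{\sigma_{t}}(I_{-1}{\bf 1},z)=I_{\sigma_{t}}(z)$; conversely Theorem \ref{2} recovers exactly these generating series as $Y_{W}(L_{-2}{\bf 1},z)$ and $Y_{W}(I_{-1}{\bf 1},z)$. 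Since $V_{\mathcal{L}}(\ell_{123},0)$ is generated by $L_{-2}{\bf 1}$ and $I_{-1}{\bf 1}$, a $\sigma_{t}$-twisted weak module structure is completely determined by the action of these two generating fields, so the two assignments are inverse bijections. This identifies $\sigma_{t}$-twisted weak $V_{\mathcal{L}}(\ell_{123},0)$-modules ($\ell_{2}=0$) with restricted $\mathcal{L}_{t}$-modules of level $\ell_{13}$.

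Next I would match the gradings. The key point is that under this correspondence the operator $L(0)$, which for a module is the component $(L_{-2}{\bf 1})_{1}=\omega_{1}$ occurring in $Y_{W}(\omega,z)=\sum_{n}L(n)z^{-n-2}$, coincides with the Lie-algebra element $L_{0}$ acting on the $\mathcal{L}_{t}$-module $W$. Hence the decomposition of $W$ into $L_{0}$-eigenspaces is literally the same in both pictures, and the conditions (TW5), (TW6), (TW7) translate verbatim into the requirements that $W$ be $\mathbb{C}$-graded by $L_{0}$-eigenvalues with the two grading restrictions. Therefore a $\sigma_{t}$-twisted module for $V_{\mathcal{L}}(\ell_{123},0)$ viewed as a vertex operator algebra is precisely a restricted $\mathcal{L}_{t}$-module of level $\ell_{13}$ carrying such a grading, which is the first assertion.

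For the statement about submodules, I would show that a subspace $U\subseteq W$ is a $\sigma_{t}$-twisted $V_{\mathcal{L}}(\ell_{123},0)$-submodule if and only if it is an $\mathcal{L}_{t}$-submodule. One implication is immediate: a submodule is closed under the components $(L_{-2}{\bf 1})_{n}$ and $(I_{-1}{\bf 1})_{n}$, hence under all $L_{n}$ and $I_{n+\frac{1}{t}}$, so it is an $\mathcal{L}_{t}$-submodule. For the converse I would use the twisted iterate formula (\ref{iter}), which expresses $Y_{W}(u_{n}v,z)$ entirely in terms of the component operators of $Y_{W}(u,z_{1})$ and $Y_{W}(v,z)$. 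Since $V_{\mathcal{L}}(\ell_{123},0)$ is spanned by iterated products of the two generators applied to ${\bf 1}$, an $\mathcal{L}_{t}$-submodule $U$, being stable under the modes of $L_{-2}{\bf 1}$ and $I_{-1}{\bf 1}$, is stable under the modes $v_{n}$ of every $v\in V_{\mathcal{L}}(\ell_{123},0)$, hence is a submodule in the vertex-algebra sense. Finally, every $\mathcal{L}_{t}$-submodule is automatically graded, since it is invariant under $L_{0}$ and the grading is by $L_{0}$-eigenvalues, so the $L_{0}$-eigenspace decomposition of $W$ restricts to one of $U$.

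The step I expect to require the most care is the converse direction of the submodule claim: making precise that stability under the modes of the two generating fields propagates, via the iterate formula (\ref{iter}), to stability under the modes of all of $V_{\mathcal{L}}(\ell_{123},0)$. This is the $\sigma_{t}$-twisted analogue of the standard fact that a generating set controls module and submodule structure (cf. Proposition \ref{hom}), but in the twisted setting one must track the fractional powers $z_{1}^{k/t}$ arising from $u\in V^{k}$; once this bookkeeping is in place the conclusion follows routinely.
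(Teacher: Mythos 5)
Your proposal is correct and follows essentially the same route as the paper, which simply derives this theorem by combining Theorems \ref{1} and \ref{2}: the weak-module structures correspond because $V_{\mathcal{L}}(\ell_{123},0)$ is generated by $\omega$ and $I_{-1}{\bf 1}$, the operator $L(0)=\omega_{1}$ is identified with the Lie-algebra element $L_{0}$ so the gradings and restrictions (TW5)--(TW7) match, and submodule correspondence follows from the twisted iterate formula (\ref{iter}) applied to the generating fields. The paper leaves all of these details implicit, so your write-up is a faithful (and more explicit) version of the intended argument.
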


	Hence irreducible restricted $\mathcal{L}_{t}$-modules of level $\ell_{13}$ corresponds to irreducible $\sigma_{t}$-twisted $V_{\mathcal{L}}(\ell_{123},0)$-modules.
	Recall that for any $h\in\C$, $L_{\mathcal{L}_{t}}(\ell_{1}, \ell_{3}, h)$
	is an irreducible restricted $\mathcal{L}_{t}$-module of level $\ell_{13}$, so 
	it is an irreducible $\sigma_{t}$-twisted $V_{\mathcal{L}}(\ell_{123},0)(\ell_{2}=0)$-module.
	
	Now we give the complete list of irreducible $\sigma_{t}$-twisted $V_{\mathcal{L}}(\ell_{123},0)$-modules.
	\begin{thm}
		Let $\ell_{2}=0$, $\ell_{1},\ell_{3}\in\mathbb{C}$. Then
		$\{L_{\mathcal{L}_{t}}(\ell_{1}, \ell_{3}, h)\ | \ h\in\C\}$ is a complete list of irreducible $\sigma_{t}$-twisted $V_{\mathcal{L}}(\ell_{123},0)$-modules.
	\end{thm}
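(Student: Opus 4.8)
The plan is to deduce this classification entirely from the correspondence already established in Theorem \ref{voamod}, so that the problem reduces to the purely Lie-algebraic one of classifying the irreducible restricted $\mathcal{L}_t$-modules of level $\ell_{13}$ that are $\C$-graded by $L_0$-eigenvalues and satisfy the grading restrictions (TW6), (TW7). We already know that each $L_{\mathcal{L}_t}(\ell_1,\ell_3,h)$ is such a module, hence an irreducible $\sigma_t$-twisted $V_{\mathcal{L}}(\ell_{123},0)$-module; what remains is to show that every irreducible $\sigma_t$-twisted module occurs in this list, and that distinct values of $h$ give non-isomorphic modules.

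For the first point, let $W$ be an irreducible $\sigma_t$-twisted $V_{\mathcal{L}}(\ell_{123},0)$-module, regarded via Theorem \ref{voamod} as an irreducible graded restricted $\mathcal{L}_t$-module of level $\ell_{13}$. First I would invoke Remark \ref{irrtwi}: since $W$ is irreducible, $W=W_{[h]}$ for a single class $h\in\C/\frac{1}{t}\Z$, so all $L_0$-eigenvalues lie in one coset of $\frac{1}{t}\Z$, and by (TW6) the grading is bounded below. Let $W_{(h_0)}$ be the lowest nonzero homogeneous subspace and fix $0\neq v\in W_{(h_0)}$. From the $\frac{1}{t}\Z$-grading of $\mathcal{L}_t$ by $\mathrm{ad}\,L_0$-eigenvalues, the operators $L_m$ with $m\geq 1$ and $I_{n+\frac{1}{t}}$ with $n\geq 0$ strictly lower the $L_0$-weight, hence send $v$ into a zero weight space; thus $L_m v=0$ for $m\geq 1$ and $I_{n+\frac{1}{t}}v=0$ for $n\geq 0$, while $L_0 v=h_0 v$ and $k_i v=\ell_i v$ for $i=1,3$.

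Next I would apply the universal property of $M_{\mathcal{L}_t}(\ell_1,\ell_3,h_0)$ recorded in Remark \ref{universal}: the vector $v$ satisfies exactly the defining relations of ${\bf 1}_{\Bbbk_{13},h_0}$, so there is a unique $\mathcal{L}_t$-module homomorphism $M_{\mathcal{L}_t}(\ell_1,\ell_3,h_0)\to W$ sending ${\bf 1}_{\Bbbk_{13},h_0}\mapsto v$. Its image is the submodule generated by $v$, which equals $W$ by irreducibility, so the map is surjective. Since every proper submodule of $M_{\mathcal{L}_t}(\ell_1,\ell_3,h_0)$ meets the one-dimensional generating space $M_{\mathcal{L}_t}(\ell_1,\ell_3,h_0)_{(h_0)}$ trivially, there is a unique maximal proper submodule $T_{\mathcal{L}_t}(\ell_1,\ell_3,h_0)$, and the only irreducible quotient is $L_{\mathcal{L}_t}(\ell_1,\ell_3,h_0)$; hence $W\cong L_{\mathcal{L}_t}(\ell_1,\ell_3,h_0)$. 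Distinctness is then immediate: because the quotient does not disturb the generating space, $h$ is recovered from $L_{\mathcal{L}_t}(\ell_1,\ell_3,h)$ as its minimal $L_0$-eigenvalue, so different $h$ yield non-isomorphic modules.

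I expect the only real subtlety to be this reduction step rather than any computation: one must use Remark \ref{irrtwi} to confine $W$ to a single $\frac{1}{t}\Z$-coset and (TW6) to guarantee a bottom weight space, for otherwise there need be no lowest-weight vector $v$ to feed into the universal property. Once the existence of $v$ and its annihilation by the weight-lowering part are secured, the remainder is the standard argument that an irreducible graded module generated by a lowest-weight vector must be the irreducible quotient of the corresponding induced module, which here is $L_{\mathcal{L}_t}(\ell_1,\ell_3,h_0)$.
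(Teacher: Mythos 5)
Your proposal is correct and follows essentially the same route as the paper: reduce via Theorem \ref{voamod} to irreducible restricted graded $\mathcal{L}_{t}$-modules, use Remark \ref{irrtwi} and (TW6) to produce a lowest-weight vector annihilated by $L_{m}$ ($m\geq 1$) and $I_{n+\frac{1}{t}}$ ($n\geq 0$), apply the universal property of Remark \ref{universal}, and conclude via the unique maximal proper submodule $T_{\mathcal{L}_{t}}(\ell_{1},\ell_{3},h)$. The only cosmetic differences are that you stay entirely at the $\mathcal{L}_{t}$-module level (using the submodule correspondence in Theorem \ref{voamod}) where the paper also invokes Proposition \ref{hom} to view the map as a twisted-module homomorphism, and that you add an explicit argument that distinct $h$ give non-isomorphic modules, which the paper leaves implicit.
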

	\begin{proof}
		Let $W=\coprod\limits_{r\in\C}W_{(r)}$ be an irreducible $\sigma_{t}$-twisted $V_{\mathcal{L}}(\ell_{123},0)$-module.
		By Theorem \ref{voamod}, $W$ is an irreducible restricted $\mathcal{L}_{t}$-module of level $\ell_{13}$. So $k_{i}$ acts on $W$ as a scalar $\ell_{i}$ for $i=1,3$.
		From Remark \ref{irrtwi}, there exists $h\in\C$ such that $W_{(h)}\neq 0$ and $W_{(h-n)}=0$ for all $n\in\frac{1}{t}\Z_{\geq 1}$.
		Let $0\neq w\in W_{(h)}$. Then 
		$$L_{0}w=hw,\;\;L_{m}w=0,\;\;
		I_{n+\frac{1}{t}}w=0$$
		for $m\geq 1, n\geq 0$.
		In view of Remark \ref{universal}, there is a unique $\mathcal{L}_{t}$-module homomorphism 
		$$\varphi:M_{\mathcal{L}_{t}}(\ell_{1}, \ell_{3}, h) \longrightarrow W$$ such that $\varphi({\bf 1}_{\Bbbk_{13},h})=w$. By Proposition \ref{hom},
		$\varphi$ is a $\sigma$-twisted $\langle U_{W}\rangle$-module homomorphism (since $\mathcal{L}_{t}$ generates the vertex algebra $\langle U_{W}\rangle$), where $\sigma$ is an order $t$ automorphism of the vertex algebra $\langle U_{W}\rangle$. 
		Recall that $M_{\mathcal{L}_{t}}(\ell_{1}, \ell_{3}, h)$ is a weak $\sigma_{t}$-twisted $V_{\mathcal{L}}(\ell_{123},0)$-module via the vertex algebra homomorphism $\psi$ in Theorem \ref{1}. So 
		$\varphi$ can be viewed as a $\sigma_{t}$-twisted $V_{\mathcal{L}}(\ell_{123},0)$-module homomorphism.
		Since $W$ is irreducible and $T_{\mathcal{L}_{t}}(\ell_{1}, \ell_{3}, h)$ is the (unique) largest proper submodule of $M_{\mathcal{L}_{t}}(\ell_{1}, \ell_{3}, h)$, it follows that
		$$\varphi(M_{\mathcal{L}_{t}}(\ell_{1}, \ell_{3}, h))=W$$
		and 
		$$\mbox{Ker}\;\varphi=T_{\mathcal{L}_{t}}(\ell_{1}, \ell_{3}, h).$$ Thus $\varphi$ reduces to a $\sigma_{t}$-twisted $V_{\mathcal{L}}(\ell_{123},0)$-module isomorphism from $L_{\mathcal{L}_{t}}(\ell_{1}, \ell_{3}, h)$ to $W$.
	\end{proof}
	
	It is interesting and important to classify the irreducible modules for the fixed point subalgebra $V_{\mathcal{L}}(\ell_{123},0)^{\sigma_{t}}:=\{v\in V_{\mathcal{L}}(\ell_{123},0) \ | \ \sigma_{t}(v)=v \}$, $t\in\Z_{\geq 1}$. In the case of $\ell_{2}=0$ and $\ell_{3}\neq 0$, we have $t=2$. Then $\sigma_{t}$ is the order 2 automorphism of $V_{\mathcal{L}}(\ell_{123},0)$ which is induced from its Heisenberg vertex operator subalgebra. Denote by $\sigma_{2}=\sigma$.
	Precisely, the automorphism
	$$\sigma: V_{\mathcal{L}}(\ell_{123},0)\longrightarrow V_{\mathcal{L}}(\ell_{123},0)$$
	is defined on the basis elements by
	$$I_{-k_{1}}\cdots I_{-k_{s}}L_{-m_{1}}\cdots L_{-m_{r}}{\bf 1}\mapsto (-1)^{s}I_{-k_{1}}\cdots I_{-k_{s}}L_{-m_{1}}\cdots L_{-m_{r}}{\bf 1},$$
	and extended linearly,
	where $r,s\geq 1$, $m_{1}\geq\cdots\geq m_{r}\geq 2$, $k_{1}\geq\cdots\geq k_{s}\geq 1$.
	Let
	$$V_{\mathcal{L}}(\ell_{123},0)^{+}=\{v\in V_{\mathcal{L}}(\ell_{123},0) \ | \ \sigma(v)=v\}$$ be the fixed point subalgebra under $\sigma$.

	For $\ell_{3}\neq 0$, denote by
	$$c_{\tilde{Vir}}=\ell_{1}-1+12\frac{\ell_{2}^{2}}{\ell_{3}}.$$
	Let $V_{\mathcal{H}}(\ell_{3},0)$ be the vertex operator algebra constructed from the Heisenberg subalgebra $\mathcal{H}$ which is equipped with the nonstandard conformal vector $\omega_{H}=\frac{1}{2\ell_{3}}I_{-1}I_{-1}{\bf 1}+\displaystyle\frac{\ell_{2}}{\ell_{3}}I_{-2}{\bf 1}$
	(of central charge $1-12\displaystyle\frac{\ell_{2}^{2}}{\ell_{3}}$).
	Let $\tilde{Vir}$ be the Virasoro algebra constructed by $\tilde{\omega}=\omega-\omega_{H}$.
	Let $V_{\tilde{Vir}}(c_{\tilde{Vir}},0)$ be the corresponding Virasoro vertex operator algebra. Recall that when $\ell_{3}\neq 0$, we have (cf. Theorem 3.16 of \cite{GW})
	$$V_{\mathcal{L}}(\ell_{123},0)\cong V_{\mathcal{H}}(\ell_{3},0)\otimes V_{\tilde{Vir}}(c_{\tilde{Vir}},0)$$
	as vertex operator algebras.

	It is well known that there exists an order 2 isomorphism of the Heisenberg vertex operator algebra $V_{\mathcal{H}}(\ell_{3},0)$.
	Let again
	$$\sigma: V_{\mathcal{H}}(\ell_{3},0)\longrightarrow V_{\mathcal{H}}(\ell_{3},0)$$
	be the order 2 automorphism
	which defined on the basis elements by 
	$$I_{-k_{1}}\cdots I_{-k_{s}}{\bf 1}\mapsto (-1)^{s}I_{-k_{1}}\cdots I_{-k_{s}}{\bf 1}$$
	and extended linearly,
	where $s\geq 1$, $k_{1}\geq\cdots\geq k_{s}\geq 1$.
	The fixed point subalgebra 
	$V_{\mathcal{H}}(\ell_{3},0)^{+}=\{v\in V_{\mathcal{H}}(\ell_{3},0) \ | \ \sigma(v)=v\}$
	% which is usually refer to as $M(1)^{+}$ 
	has beed extensively studied (cf. \cite{DN} etc.).
	
	Then it is immediately to see that
	when $\ell_{3}\neq 0$ and $\ell_{2}=0$, 
	we
	have
	an isomorphism of vertex operator algebras
	$$V_{\mathcal{L}}(\ell_{123},0)^{+}\cong V_{\mathcal{H}}(\ell_{3},0)^{+}\otimes V_{\tilde{Vir}}(c_{\tilde{Vir}},0).$$
	Up to isomorphism, irreducible modules for the vertex operator algebra $V_{\mathcal{H}}(\ell_{3},0)^{+}$ are (cf. \cite{DN}, etc.)
	$V_{\mathcal{H}}(\ell_{3},0)^{\pm}, V_{\mathcal{H}}(\ell_{3},0)(\sigma)^{\pm},V_{\mathcal{H}}(\ell_{3},\lambda)\cong V_{\mathcal{H}}(\ell_{3},-\lambda),\;\;\forall\;0\neq \lambda\in\C.$
	Up to isomorphism, irreducible modules for the Virasoro vertex operator algebra $V_{\tilde{Vir}}(c_{\tilde{Vir}},0)$ are $L_{\tilde{Vir}}(c_{\tilde{Vir}},h)$ for all $h\in\C$ (cf. \cite{FZhu}, \cite{W}, etc.).
	Therefore, in the case of $\ell_{2}=0$ and $\ell_{3}\neq 0$,
	% by ,
	irreducible modules of $V_{\mathcal{L}}(\ell_{123},0)^{+}$
	are one-to-one correspond to the tensor product of irreducible modules of
	$V_{\mathcal{H}}(\ell_{3},0)^{+}$ and irreducible  modules of $V_{\tilde{Vir}}(c_{\tilde{Vir}},0)$ (cf. Proposition 4.7.2 and Theorem 4.7.4 of \cite{FHL}).

	For other automorphism $\sigma_{t}$, %$t$ is a positive integer,
	the complete list of irreducible $V_{\mathcal{L}}(\ell_{123},0)^{\sigma_{t}}$-modules remains to be investigated.


\begin{thebibliography}{99}
		\bibitem[1]{ACKP} E. Arbarello, C. De Concini, V. G. Kac, C. Procesi, Moduli spaces of curves and representation theory,
		{\em Comm. Math. Phys.} {\bf 117} (1988), no. 1, 1-36.
		
		\bibitem[2]{AR1}
		D. Adamović, G. Radobolja, Free ﬁeld realization of the twisted Heisenberg– Virasoro algebra at level zero and its applications, {\em J. Pure Appl. Algebra} {\bf219} (2015), no. 10, 4322–4342. 
		
		\bibitem[3]{AR2}
		D. Adamović, G. Radobolja,
		Self-dual and logarithmic representations of the twisted Heisenberg-Virasoro algebra at level zero,
		{\em Commun. Contemp. Math.} {\bf21} (2019),  no. 2, 1850008, 26 pp.
		
		
		\bibitem[4]{B1} Y. Billig, 
		Representations of the twisted Heisenberg-Virasoro algebra at level zero,
		{\em Canad. Math. Bull.} {\bf 46}(2003), no. 4, 529-537.
		
		\bibitem[5]{B2} Y. Billig,
		A category of modules for the full toroidal Lie algebra,
		{\em Int. Math. Res. Not.} (2006), Art. ID. 68395, 46pp.
		
		\bibitem[6]{DN}
		C. Dong, K. Nagatomo, 
		Classification of irreducible modules for the vertex operator algebra $M(1)^+$,
		{\em J. Algebra} {\bf 216} (1999), no. 1, 384–404.
		
		\bibitem[7]{FHL}
		I. B. Frenkel, Y. Z. Huang, J. Lepowsky, 
		On axiomatic approaches to vertex operator algebras and modules,
		{\em Mem. Amer. Math. Soc.} {\bf104} (1993), no. 494, Viii+64 pp.
		
		\bibitem[8]{FZhu}
		I. B. Frenkel, Y. C. Zhu,
		Vertex operator algebras associated to representations of affine and Virasoro algebras,
		{\em Duke Math. J.} {\bf 66} (1992), no. 1, 123-168.
		
		\bibitem[9]{FZ} 
		I. B. Frenkel, A. M. Zeitlin, 
		Quantum group $GL_{q}(2)$
		and quantum Laplace operator via semi-infinite cohomology,
		{\em J. Noncommut. Geom.} (7) (2013), no. 4, 1007–1026. 	
		
		\bibitem[10]{GLTW}
		H. Guo, H. S. Li, S. Tan, Q. Wang, $q$-Virasoro algebra and vertex algebras, {\em J. Pure Appl. Algebra } {\bf 219} (2015), no. 4, 1258-1277.
		
		\bibitem[11]{GW1}
		H. Guo, Q. Wang, Associating vertex algebras with the unitary Lie algebra, {\em J. Algebra } {\bf 424} (2015), 126-146.
		
		\bibitem[12]{GW}
		H. Guo, Q. Wang, Twisted Heisenberg-Virasoro vertex operator algebra, {\em Glas. Mat. Ser. III} {\bf 54} (74) (2019),  no. 2, 369–407. 
		
		
		\bibitem[13]{L1}
		H. S. Li,  
		Local systems of twisted vertex operators, vertex operator superalgebras and twisted modules.
		In: Moonshine, the Monster, and related topics, pp. 203–236, 
		{\em Contemp. Math.} {\bf193}, Amer. Math. Soc., Providence, RI, 1996.
		
		\bibitem[14] {LL} 
		J. Lepowsky, H. S. Li,
		Introduction to Vertex Operator Algebras
		and Their Representations, {\em Progress in Mathematics}, Vol. 227,
		Birkh\"auser, Boston, 2004.
		
		\bibitem[15]{SJ}
		R. Shen, C. Jiang,
		The derivation algebra and automorphism group of the twisted Heisenberg-Virasoro algebra,
		{\em Comm. Algebra} {\bf 34} (2006), no. 7, 2547–2558.
		
		
		\bibitem[16]{W} W. Wang, 
		Rationality of Virasoro Vertex Operator Algebras,
		{\em Int. Math. Res. Not.} (1993), no. 7, 197-211.
		
	\end{thebibliography}
\end{document}